\newtheorem{claim}{Claim}
\newtheoremstyle{theorem}
     {11pt}
     {11pt}
     {}
     {}
     {\bfseries}
     {}
     {.5em}
     {\noindent\thmnumber{#2}. \thmname{#1}\thmnote{#3}}
\theoremstyle{theorem}
\newcommand{\CDH}{\ensuremath{\mathsf{CDH}}}
\newcommand{\csf}{{}\sp\omega{2}}
\newcommand{\homeo}[1]{\mathcal{H}(#1)}
\newcommand{\G}{\mathcal{G}}
\newcommand{\gen}[1]{\langle\!\langle #1\rangle\!\rangle}
\newcommand{\id}{\mathbf{1}}
\newcommand{\fix}[1]{\mathsf{fix}(#1)}
\newcommand{\C}{\mathcal{C}}
\newcommand{\U}{\mathcal{U}}
\newcommand{\co}[1]{\mathcal{CO}(#1)}
\newcommand{\inte}[2][X]{\mathrm{int}_{#1}({#2})}
\newcommand{\bd}[2][X]{\mathrm{bd}_{#1}({#2})}
\newcommand{\cl}[2][X]{\mathrm{cl}_{#1}({#2})}
\newcommand{\pair}[1]{\langle#1\rangle}
\newcommand{\V}{\mathcal{V}}
\newtheorem{thm}{Theorem}[section]
\newtheorem{lemma}[thm]{Lemma}
\newtheorem{propo}[thm]{Proposition}
\newtheorem{coro}[thm]{Corollary}
\newtheorem{ques}[thm]{Question}
\newtheorem{sclaim}[thm]{\empty\hskip-8pt}
\title{Countable dense homogeneity and the Cantor set}
\author[Hern\'andez-Guti\'errez]{Rodrigo Hern\'andez-Guti\'errez}
\address[Hern\'andez-Guti\'errez]{Departamento de Matem\'aticas, Universidad Aut\'onoma Metropolitana campus Iztapalapa, Av. San Rafael Atlixco 186, Col. Vicentina, Iztapalapa, 09340, Mexico city, Mexico}
\email[Hern\'andez-Guti\'errez]{rodrigo.hdz@gmail.com}
\date{\today}
\subjclass[2010]{54G20, 54D65, 54D30, 54B35}
\keywords{Countable dense homogeneous, Cantor set, counterexample, compact space}
\begin{document}

\begin{abstract}
It is shown that CH implies the existence of a compact Hausdorff space that is countable dense homogeneous, crowded and does not contain topological copies of the Cantor set. This contrasts with a previous result by the author which says that for any crowded Hausdorff space $X$ of countable $\pi$-weight, if ${}\sp\omega{X}$ is countable dense homogeneous, then $X$ must contain a topological copy of the Cantor set.
\end{abstract}
 
 \maketitle

\section{Introduction}

All the spaces considered below are Hausdorff spaces.\vskip6pt

A space $X$ is countable dense homogeneous (\CDH{}, henceforth) if $X$ is separable and every time $D,E\subset X$ are countable dense subsets, there is a homeomorphism $h:X\to X$ such that $h[D]=E$. Among examples of \CDH{} spaces we have the Euclidean spaces, the Hilbert cube and the Cantor set. For updated surveys on \CDH{} spaces, see sections 14, 15 and 16 of \cite{arh-vm-homogeneity} and the more recent \cite{hru-vm-cdh_survey}.

For some time there were no known ZFC examples of \CDH{} spaces that are compact and non-metrizable. In \cite{steprans-zhou-CDH} and \cite{cdhdefinable} it was shown that ${}\sp{\kappa}{2}$ is \CDH{} if and only if $\kappa<\mathfrak{p}$ and in \cite{arh-vm-cdh-cardinality} there is a CH construction of a compact CDH space of uncountable weight that is almost Luzin (that is, every nowhere dense subset is second countable). Finally, a compact \CDH{} space of uncountable weight was constructed without further set-theoretical assumptions in \cite{hg-hr-vm}. 

However, the example in \cite{hg-hr-vm} is the only known example of a compact \CDH{} space of uncountable weight in ZFC. Thus, it is desirable to find other examples of such spaces with different topological properties. For example, it is still unknown if there exist compact \CDH{} spaces of uncountable weight that are either connected or of weight equal to exactly $\mathfrak{c}$ in ZFC (see the open problems at the end of \cite{hg-hr-vm}).

In \cite{hg-arrow} the author showed that if $X$ is a crowded space of countable $\pi$-weight such that ${}\sp\omega{X}$ is CDH, then $X$ contains a copy of the Cantor set. Notice that the Sorgenfrey line is an example of a crowded space of countable $\pi$-weight that is CDH but does not contain Cantor sets. However, it is easy to see that all known examples of infinite compact CDH spaces have topological copies of the Cantor set.

Shortly after the results of \cite{hg-hr-vm} and \cite{hg-arrow} were obtained, Michael Hru\v s\'ak and I were having this same discussion and we considered the following problem.

\begin{ques}\label{mainques}
 Does there exist a crowded, compact, Hausdorff space that is CDH and does not contain topological copies of the Cantor set?
\end{ques}

Arhangel'sk\u\i{} and van Mill constructed their CH example from \cite{arh-vm-cdh-cardinality} using an inverse limit of Cantor sets of length $\omega_1$. So it is natural to try to mimic this technique to answer Question \ref{mainques} under CH. The technical problem faced then is how to kill all possible Cantor sets in $\omega_1$ steps. Back in 2013, Hru\v s\'ak gave such an argument, answering Question \ref{mainques} in the affirmative, by using the guessing principle $\diamondsuit$.

Since that time, it was the intention of the author to answer Question \ref{mainques} in the affirmative assuming only CH. This paper finally provides a complete argument:

\begin{thm}\label{main}
CH implies that there is a first countable, hereditarily separable and $0$-dimensional compact Hausdorff space of uncountable weight that is CDH but does not contain topological copies of the Cantor set. 
\end{thm}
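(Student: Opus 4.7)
The plan is to build $X$ as the inverse limit of a transfinite inverse system $\langle X_\alpha, \pi_\beta^\alpha\mid \alpha\le\beta<\omega_1\rangle$ of Cantor sets, following the blueprint of the Arhangel'ski\u\i--van Mill construction of \cite{arh-vm-cdh-cardinality}. Each $X_\alpha$ will be a homeomorphic copy of $\csf$, inverse limits will be taken at limit ordinals, and the bonding maps $\pi_\beta^\alpha$ will be continuous surjections with sufficiently small (Fedorchuk-style, at worst countable) fibres so that $X = X_{\omega_1}$ remains first countable, hereditarily separable, zero-dimensional and crowded, with weight $\aleph_1 = \mathfrak{c}$. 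Using CH, I would enumerate in a single list $\langle T_\alpha\mid\alpha<\omega_1\rangle$ both the countable bookkeeping data needed for \CDH{} (pairs $(D,E)$ of countable subsets of the system that could become dense in $X$) and the candidate compact metrizable subsets of the system that could extend to Cantor subsets of $X$.

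For the \CDH{} half, whenever a pair $(D,E)$ is already dense in $X_\alpha$ at the scheduled stage $\alpha$, I would use the homogeneity of $X_\alpha\cong\csf$ to fix a self-homeomorphism $h_\alpha$ of $X_\alpha$ mapping $D$ onto $E$, and then commit all subsequent bonding maps to be equivariant with respect to $h_\alpha$, so that $h_\alpha$ lifts to a self-homeomorphism of $X$ sending $D$ to $E$. A standard reflection argument shows that every countable dense subset of $X$ projects to a dense subset of $X_\alpha$ for club-many $\alpha<\omega_1$, so every actual dense pair is caught in the enumeration; this part mirrors the argument of \cite{arh-vm-cdh-cardinality} essentially verbatim. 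For the no-Cantor-set half, the key observation is that any compact metrizable $C\subset X$ has countable weight, so the separating inverse system forces $\pi^\alpha|_C$ to be an embedding for some $\alpha<\omega_1$. There are therefore only $\aleph_1$ possible ``candidate'' Cantor subsets across all $X_\alpha$, and CH lets me schedule each. At the stage $\alpha$ scheduled for a candidate $C\subset X_\alpha$, I would choose $\pi_{\alpha+1}^\alpha$ so that $C$ admits no continuous section into $X_{\alpha+1}$---informally, split $C$ in a way that any attempted lift is forced to identify two of its points---thereby making $\pi^{\alpha+1}|_{C'}$ non-injective for every $C'\subset X$ with $\pi^\alpha[C']=C$ and killing the candidate.

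The principal obstacle is the compatibility of these two agendas: the equivariance condition imposed by each committed $h_\alpha$ constrains how later bonding maps may split candidate Cantor sets, while the splittings needed to destroy candidates are inherently asymmetric and risk destroying the symmetry that lets homeomorphisms lift. Organising the bookkeeping so that Cantor-killing splittings never interfere with previously committed equivariances (and vice versa), and doing so with only the anticipatory power of CH rather than $\diamondsuit$, is the technical heart of the proof: it is where Hru\v s\'ak's original $\diamondsuit$ argument must be sharpened into a pure CH construction. I expect first countability and hereditary separability to drop out automatically from the length-$\omega_1$ recursion and the Fedorchuk-style, countable-fibre nature of the bonding maps, as is standard in such constructions.
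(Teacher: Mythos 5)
Your architecture matches the paper's: inverse limit of length $\omega_1$ with $X_\alpha\cong\csf$, bonding maps that split a point in a chosen Cantor set at each successor step, and CDH witnessed by self-homeomorphisms of intermediate stages that are kept equivariant with all later bonding maps so they lift to $\homeo{X}$. But the paragraph where you say ``Organising the bookkeeping so that Cantor-killing splittings never interfere with previously committed equivariances \dots is the technical heart of the proof'' is exactly the step that needs a concrete mechanism, and you leave it unresolved. This is a genuine gap, not a matter of detail.

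Here is why the conflict is real and how the paper escapes it. If $\G_\alpha\subset\homeo{X_\alpha}$ is the group of homeomorphisms you have committed to lift, then in order for the splitting map $\rho_\alpha^{\alpha+1}$ to be $\G_\alpha$-equivariant the split point $y$ must lie outside $\fix{\G_\alpha}=\bigcup\{\fix{g}:g\in\G_\alpha\setminus\{\id\}\}$ (otherwise some $g$ would have to both fix and move one of the two new points). If $\G_\alpha$ is an arbitrary countable group, $\fix{\G_\alpha}$ can be a dense $G_\delta$, or even cover your target Cantor set $Y$, and then there is no admissible $y\in Y$ to split; the Cantor-killing agenda stalls. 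The paper's resolution is to require $\G_\alpha$ to be \emph{cofinitary} (every nonidentity element has only finitely many fixed points), so $\fix{\G_\alpha}$ is countable and misses a dense set of every Cantor set. Maintaining cofinitariness then becomes the real work: when you want to add a new homeomorphism $H$ to $\G_\alpha$ with $H[D]=E$, you must find $H$ so that $\gen{\G_\alpha\cup\{H\}}$ is \emph{still} cofinitary. That is precisely Lemma \ref{thelemma}, whose proof occupies all of Section~\ref{section-lemma} and is by far the hardest part of the paper (it is the replacement for Hru\v s\'ak's $\diamondsuit$ argument that you allude to). Your proposal needs this lemma --- or an equivalent cofinitary-preservation device --- explicitly; without it the recursion has no guarantee of continuing past a stage where the accumulated fixed-point set has filled up the Cantor sets waiting to be destroyed. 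As a secondary point, your description of the killing step (``split $C$ so any lift is forced to identify two of its points'') is not quite the mechanism used: the paper instead arranges, via Lemma \ref{splitting}, that the preimage of the target Cantor set $F_\beta$ under $\rho_\beta^{\beta+1}$ is crowded and contains a doubled point, and then shows by a reflection/density argument that any Cantor $Y\subset X$ with $\pi_\beta[Y]=F_\beta$ would force $\rho_\beta^{\beta+1}$ to be a homeomorphism on $\pi_{\beta+1}[Y]=(\rho_\beta^{\beta+1})^{\leftarrow}[F_\beta]$, a contradiction. This is a minor repair compared to the missing cofinitary lemma.
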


In exchange of using $\diamondsuit$, the proof of Theorem \ref{main} given here will require Lemma \ref{thelemma} below, which is a dynamical property of the group $\homeo{\csf}$ of autohomeomorphisms of the Cantor set.

The paper is organized as follows. Section \ref{section-prelim} contains preliminaries. In Section \ref{section-main} we give the proof of Theorem \ref{main} assuming the truth of Lemma \ref{thelemma}, the proof of which is long and will have to wait for Section \ref{section-lemma}. In Section \ref{section-groups} we give an unexpected application of Lemma \ref{thelemma} and some other related applications. Finally, in Section \ref{section-final}, we make some further remarks about Theorem \ref{main}.

\section{Notation and preliminaries}\label{section-prelim}

If $f:A\to B$ is a function and $C\subset B$, $f\sp\leftarrow[C]$ is the inverse image of $C$. For any set $A$, $\lvert A\rvert$ denotes the cardinality of $A$. If $A$ is a subset in a space $X$, $\inte{A}$, $\cl{A}$, $\bd{A}$ denote the interior, closure, boundary, respectively, of $A$. A crowded space is one without isolated points.

A function $f:X\to Y$ between topological spaces is said to be irreducible if it is closed and for every closed $A$, $f[A]=Y$ if and only if $A=X$. If $X$ is a topological space, let $\co{X}$ denote its set of clopen subsets, this is a Boolean algebra with the inclusion order.

We leave the proof of the following two easy facts to the reader.

\begin{lemma}\label{1stctble}
Let $f:X\to Y$ be a continuous and irreducible function between $0$-dimensional compact Hausdorff spaces. If $y\in Y$ is such that $f\sp\leftarrow(y)=\{x\}$ then $\{f\sp\leftarrow[V]:V\textrm{ is clopen and }y\in V\}$ is a local base in $x$.
\end{lemma}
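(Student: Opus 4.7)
The plan is to show that for any open neighborhood $U$ of $x$, there is a clopen $V \subset Y$ with $y \in V$ and $f^{\leftarrow}[V] \subset U$; since sets of the form $f^{\leftarrow}[V]$ with $y \in V$ are automatically open in $X$ and contain $x$, this is exactly what is needed.

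First, I would use $0$-dimensionality of $X$ to shrink $U$: pick a clopen $W$ with $x \in W \subset U$. Then $A := X \setminus W$ is clopen (in particular closed) and disjoint from $x$. The key observation is that $f[A]$ is closed in $Y$: since $X$ is compact and $Y$ is Hausdorff, any continuous $f : X \to Y$ is automatically closed (irreducibility is not actually needed at this point, although it is part of the hypothesis). Because $f^{\leftarrow}(y) = \{x\}$ and $x \notin A$, we have $y \notin f[A]$, so $Y \setminus f[A]$ is an open neighborhood of $y$.

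Now I would invoke $0$-dimensionality of $Y$ to find a clopen $V$ with $y \in V \subset Y \setminus f[A]$. Then $f^{\leftarrow}[V] \cap A = \emptyset$, equivalently $f^{\leftarrow}[V] \subset W \subset U$, which finishes the proof.

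There is no real obstacle here: the argument is a routine application of compactness plus $0$-dimensionality, and the only substantive input is that a continuous map from a compact space to a Hausdorff space is closed, so that $f[A]$ can be separated from $y$ by a clopen set. The hypothesis that $f$ is irreducible plays no role in this particular lemma; it is presumably included because the lemma will later be applied to irreducible maps elsewhere in the paper.
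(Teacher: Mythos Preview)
Your argument is correct; the paper does not give a proof of this lemma, explicitly leaving it to the reader as an easy fact. Your observation that irreducibility plays no role is also accurate: only continuity, compactness of $X$, Hausdorffness of $Y$, and $0$-dimensionality are used.
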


\begin{lemma}\label{hersep}
Let $X$ be an arbitrary space, $Y$ be hereditarily separable and $f:X\to Y$ be a continuous and irreducible function such that $f\sp\leftarrow(y)$ is finite for each $y\in Y$. Then $X$ is also hereditarily separable.
\end{lemma}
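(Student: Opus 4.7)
My plan is to show that every subspace $A\subseteq X$ is separable by pulling back a countable dense subset of $f[A]$ via $f$. Since $Y$ is hereditarily separable, $f[A]$ has a countable dense subset $B$; set
$$D := A\cap f\sp\leftarrow[B] = \bigcup_{b\in B}\bigl(A\cap f\sp\leftarrow(b)\bigr).$$
Each fiber being finite, $D$ is a countable subset of $A$. The main work is to verify that $D$ is dense in $A$.

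Fix $a\in A$ and an open neighborhood $U$ of $a$ in $X$; I want to produce some $a'\in A\cap U$ with $f(a')\in B$. Using Hausdorffness of $X$ and finiteness of the fiber $f\sp\leftarrow(f(a))=\{a_1=a,a_2,\ldots,a_n\}$, I pick pairwise disjoint open neighborhoods $V_1,\ldots,V_n$ of the $a_i$'s with $V_1\subseteq U$, and set $V:=V_1\cup\cdots\cup V_n$, an open neighborhood of the whole fiber of $f(a)$. Because $f$ is closed and $f\sp\leftarrow(f(a))\subseteq V$, the set $V^*:=Y\setminus f[X\setminus V]$ is an open neighborhood of $f(a)$ in $Y$, and it is nonempty by the irreducibility of $f$ applied to the nonempty open set $V$. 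Since $f(a)\in V^*\cap f[A]$, density of $B$ in $f[A]$ yields some $b\in B\cap V^*$, and the fiber $f\sp\leftarrow(b)\subseteq V$ meets $A$ because $b\in f[A]$.

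The main obstacle is that the nonempty intersection $A\cap f\sp\leftarrow(b)$ could a priori lie entirely in $V_2\cup\cdots\cup V_n$ and thus miss $V_1\subseteq U$. I would overcome this by applying irreducibility more locally to $V_1$ itself: the set $V_1^*:=Y\setminus f[X\setminus V_1]$ is a nonempty open subset of $Y$, pairwise disjoint from each $V_j^*$. The key step is to show that $V_1^*$ meets $f[A]$; equivalently, that some point of $A$ has its whole fiber inside $V_1$. Once this is established, density of $B$ in $f[A]$ gives some $b'\in B\cap V_1^*$ whose fiber lies entirely in $V_1\subseteq U$, and then $A\cap f\sp\leftarrow(b')$ is a nonempty subset of $D\cap U$. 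Showing $V_1^*\cap f[A]\neq\emptyset$ is the delicate point and is where the finiteness of fibers combines most essentially with the irreducibility of $f$; the intuition is that irreducibility forbids $A$ from ``conspiring'' to only have fibers split among $V_2,\ldots,V_n$ near $a$, and the argument is made precise by an iterative shrinking of the $V_i$'s around each fiber point or, alternatively, by a contradiction via a left-separated witness in $Y$ after thinning by the constant fiber size.
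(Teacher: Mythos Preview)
The paper does not prove this lemma; it is one of the two ``easy facts'' explicitly left to the reader. So there is no argument in the paper to compare against, and I comment only on your attempt.

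Your argument is incomplete precisely where you say it is, and the gap is more serious than you suggest. The claim $V_1^{*}\cap f[A]\neq\emptyset$ is in fact \emph{false} in general: take $A=\{a\}$ with $\lvert f^{\leftarrow}(f(a))\rvert\geq 2$. Then $f(a)\notin V_1^{*}$ (because $a_2\notin V_1$), so $V_1^{*}\cap f[A]=\emptyset$. In this degenerate case $D=A$ and density is trivial, but the example shows that the mechanism you are relying on --- forcing an entire fiber over some point of $f[A]$ to lie inside $V_1$ --- cannot be what makes the proof go through. What you actually need is the weaker statement that $f[U\cap A]$ has nonempty interior in $f[A]$ (so that the dense set $B$ meets it); your assertion $V_1^{*}\cap f[A]\neq\emptyset$ is strictly stronger and can fail even when that weaker statement holds.

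Neither of your proposed remedies is carried out. The ``iterative shrinking of the $V_i$'s'' is not explained, and I do not see how it would establish the (false) claim. The ``contradiction via a left-separated witness after thinning by constant fiber size'' is closer to a viable strategy --- one can try to argue that an uncountable left-separated sequence $\{x_\alpha\}$ in $X$ forces, via closedness of $f$ and hereditary separability of $Y$, ever more distinct points in the fibers $f^{\leftarrow}(f(x_\alpha))$ until the finite bound is exceeded --- but making this precise (in particular, ensuring at each stage of the iteration that the new fiber-mate is distinct from all previously produced ones) is exactly the substantive content of the lemma, and you have not supplied it.
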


Let $X$ be any topological space. Then $\homeo{X}$ will denote the group of all autohomeomorphisms of $X$. The neutral element of $\homeo{X}$ is of course the identity function $\id_{X}:X\to X$. If $X=\csf$, the identity will be simply denoted by $\id$. Given $S\subset\homeo{X}$, $\gen{S}$ will denote the subgroup of $\homeo{X}$ generated by the set $S$. A set $A\subset X$ is said to be invariant under some subgroup $\G\subset\homeo{X}$ if $h[A]=A$ for every $h\in \G$.

Now, assume that $X$ is a compact metric space with some fixed metric $\rho$. Then $\homeo{X}$ is a topological group with the topology of uniform convergence. Given $f,g\in\homeo{X}$, let $\rho(f,g)=\sup\{\rho(f(x),g(x)):x\in X\}$ and $\sigma_\rho(f,g)=\max\{\rho(f,g),\rho(f\sp{-1},g\sp{-1})\}$. Then $\sigma_\rho$ is a compatible complete metric for $\homeo{X}$. This discussion can be found in \cite{vminf89}.

For any $h\in\homeo{X}$, the set of \emph{fixed points} of $h$ will be denoted by
 $$
 \fix{h}=\{x\in X:h(x)=x\}\text{.}
 $$
 Notice that $\fix{h}$ is always closed. If $\G\subset\homeo{X}$ is a subgroup, let 
 $$
 \fix{\G}=\{x\in X:h(x)=x\textrm{ for some }h\in \G\setminus\{\id_{X}\}\}\text{.}
 $$

 We will call a subgroup $\G\subset\homeo{X}$  \emph{cofinitary} if $\fix{h}$ is finite for every $h\in \G\setminus\{\id_{X}\}$. Cofinitary groups have been considered in the context of permutation groups of the natural numbers and almost disjoint families (see for example the surveys \cite{cofin-1996} and \cite{cofin-2012}).

 We will assume the reader's familiarity with inverse sequences and inverse limits (of length an arbitrary ordinal). See \cite{chigogidze} or \cite[2.5]{eng} for introductions in the general setting.

We will write $\pair{X_\alpha,f_\alpha\sp\beta,\lambda}$ for an inverse sequence of length the limit ordinal $\lambda$, where $X_\alpha$ are the base spaces and $f_\alpha\sp\beta:X_\beta\to X_\alpha$ are the bonding functions. The inverse limit will be written as $\lim_{\leftarrow}{\pair{X_\alpha,f_\alpha\sp\beta,\lambda}}=\pair{X,\pi_\alpha}_{\lambda}$ and consists on the limit space $X$ and a projection $\pi_\alpha:X\to X_\alpha$ for each $\alpha<\lambda$. Concretely, in this situation the limit space may be constructed as
$$
X=\left\{x\in\prod\{X_\alpha:\alpha<\lambda\}:\forall\alpha<\beta<\lambda\ [x(\alpha)=f_\alpha\sp\beta(x(\beta))]\right\},
$$
and the projections are the corresponding restrictions of projections to the factor spaces of the product. An inverse sequence $\pair{X_\alpha,f_\alpha\sp\beta,\lambda}$ is continuous if every time $\gamma<\lambda$ is a limit ordinal, then $\pair{X_\gamma,f_\alpha\sp\gamma}_\gamma=\lim_{\leftarrow}{\pair{X_\alpha,f_\alpha\sp\beta,\gamma}}$. The following result is well-known.

\begin{lemma}\label{limits}
Let $\lambda$ be a limit ordinal and $\pair{X_\alpha,f_\alpha\sp\beta,\lambda}$ be an inverse sequence with $\lim_{\leftarrow}{\pair{X_\alpha,f_\alpha\sp\beta,\lambda}}=\pair{X,\pi_\alpha}_{\lambda}$.
\begin{itemize}
\item[(i)] The set $\{\pi_\alpha\sp\leftarrow[U_\alpha]:\alpha<\lambda,U_\alpha\textrm{ open in }X_\alpha\}$ is a base for the topology of $X$.
\item[(ii)] If $Y\subset X$, then $\lim_{\leftarrow}{\pair{\pi_\alpha[Y],f_\alpha\sp\beta\!\!\restriction_{\pi_\beta[Y]},\lambda}}=\pair{Y,\pi_\alpha\!\!\restriction_Y}_{\lambda}$.
\item[(iii)] If $X_\alpha$ is compact Hausdorff for each $\alpha<\lambda$ and $A$, $B$ are closed subsets of $X$ such that $A\cap B=\emptyset$, then there exists $\alpha<\lambda$ such that $\pi_\alpha[A]\cap\pi_\alpha[B]=\emptyset$.
\item[(iv)] If for each $\alpha<\lambda$, $h_\alpha:X_\alpha\to X_\alpha$ is a homeomorphism and $h_\alpha\circ f_\alpha\sp\beta=f_\alpha\sp\beta\circ h_\beta$ for each $\alpha<\beta<\lambda$, then there is a homeomorphism $h:X\to X$ such that $h_\alpha\circ\pi_\alpha=\pi_\alpha\circ h$ for each $\alpha<\lambda$.
\end{itemize}
\end{lemma}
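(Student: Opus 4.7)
The plan is to prove the four parts separately, each as a direct unpacking of the subspace description of $X$ inside $\prod_{\alpha<\lambda}X_\alpha$.

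For (i), I would use that the product topology has a subbase of cylinders $p_\alpha\sp\leftarrow[U_\alpha]$, and that their intersections with $X$ are exactly $\pi_\alpha\sp\leftarrow[U_\alpha]$. A typical basic open set of $X$ is then a finite intersection $\bigcap_{i<n}\pi_{\alpha_i}\sp\leftarrow[U_{\alpha_i}]$, which using $\pi_{\alpha_i}=f_{\alpha_i}\sp\gamma\circ\pi_\gamma$ for $\gamma=\max_i\alpha_i$ rewrites as $\pi_\gamma\sp\leftarrow[V]$ with $V=\bigcap_i(f_{\alpha_i}\sp\gamma)\sp\leftarrow[U_{\alpha_i}]$, a set of the claimed form. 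Part (ii) is essentially formal: the identity $\pi_\alpha=f_\alpha\sp\beta\circ\pi_\beta$ implies $f_\alpha\sp\beta[\pi_\beta[Y]]=\pi_\alpha[Y]$, so the restricted bonding maps form an inverse sequence, and the thread description of $X$ exhibits $Y$ with restricted projections as the limit of the image sequence.

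For (iii), the compact Hausdorff assumption is what makes the argument work: each $\pi_\alpha$ becomes a closed map, so $K_\alpha:=\pi_\alpha\sp\leftarrow[\pi_\alpha[A]\cap\pi_\alpha[B]]$ is closed in $X$. A direct check shows $K_\beta\subseteq K_\alpha$ whenever $\alpha<\beta$, since the compatibility $\pi_\alpha=f_\alpha\sp\beta\circ\pi_\beta$ carries $\pi_\beta[A]$ into $\pi_\alpha[A]$ and likewise for $B$. Using (i) together with the closedness of $A$ and $B$, a routine argument shows $\bigcap_\alpha K_\alpha=A\cap B=\emptyset$, so by compactness of $X$ some $K_\alpha$ is itself empty; since $\pi_\alpha[A]\cap\pi_\alpha[B]\subseteq\pi_\alpha[X]$, this yields $\pi_\alpha[A]\cap\pi_\alpha[B]=\emptyset$.

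Finally, for (iv), the natural candidate is the coordinate-wise map $h:X\to X$ given by $h(x)(\alpha)=h_\alpha(x(\alpha))$. The commutativity hypothesis $h_\alpha\circ f_\alpha\sp\beta=f_\alpha\sp\beta\circ h_\beta$ is exactly what is needed for $h(x)$ to remain a compatible thread, so $h(x)\in X$. Applying the same construction to the family $\{h_\alpha\sp{-1}:\alpha<\lambda\}$ produces a two-sided inverse, and continuity of $h$ and $h\sp{-1}$ is immediate from (i), since $h\sp{-1}[\pi_\alpha\sp\leftarrow[U]]=\pi_\alpha\sp\leftarrow[h_\alpha\sp{-1}[U]]$. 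The only step requiring any care is the identification $\bigcap_\alpha K_\alpha=A\cap B$ in (iii), which combines compactness with (i); otherwise everything reduces to standard inverse-limit bookkeeping.
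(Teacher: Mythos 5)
Parts (i), (iii) and (iv) of your proposal are correct and carried out in the standard way: the finite-intersection reduction to a single cylinder in (i) works because a finite set of ordinals below $\lambda$ has a maximum below $\lambda$; the nested family $K_\alpha$ in (iii) together with compactness of $X$ and the identification $\bigcap_\alpha K_\alpha=A\cap B$ (which does use (i) and the closedness of $A,B$) gives the desired $\alpha$; and in (iv) the coordinatewise map is a compatible thread by the commutation hypothesis, with inverse induced by $\{h_\alpha\sp{-1}\}$ and continuity read off the subbase from (i).

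The gap is in (ii), which you call ``essentially formal.'' Only one inclusion is formal: every $y\in Y$ gives the compatible thread $(\pi_\alpha(y))_\alpha\in\prod_\alpha\pi_\alpha[Y]$, so $Y\subset\lim_{\leftarrow}\pair{\pi_\alpha[Y],f_\alpha\sp\beta\!\!\restriction_{\pi_\beta[Y]},\lambda}$. The reverse inclusion can fail for arbitrary $Y\subset X$. For instance, take $\lambda=\omega$, $X_n=2\sp{n}$ with the truncation maps as bonding maps, so $X\cong\csf$, and let $Y$ be the set of eventually-zero threads; then $\pi_n[Y]=X_n$ for every $n$, so the limit of the image sequence is all of $X$, which properly contains $Y$. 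What makes the statement true in the setting where the paper applies it is that there $Y$ is a Cantor set, hence compact and closed in the compact Hausdorff limit: given a compatible thread $(y_\alpha)\in\prod_\alpha\pi_\alpha[Y]$, the sets $Z_\alpha=Y\cap\pi_\alpha\sp\leftarrow(y_\alpha)$ are nonempty closed subsets of $Y$, decreasing in $\alpha$ because $\pi_\alpha=f_\alpha\sp\beta\circ\pi_\beta$, so compactness of $Y$ yields a point $z\in\bigcap_\alpha Z_\alpha$, and then $z$ is precisely the thread $(y_\alpha)$, hence $(y_\alpha)\in Y$. You should include this compactness argument (or, equivalently, add the hypothesis that $Y$ is compact, or that the $X_\alpha$ are compact Hausdorff and $Y$ is closed) rather than treat (ii) as a tautology; as literally stated for arbitrary $Y\subset X$ it is not a correct assertion and your phrase ``the thread description of $X$ exhibits $Y$ ... as the limit of the image sequence'' is exactly where the argument is missing.
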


The following well-known lemma implies that every Cantor set in a long inverse limit must appear in an intermediate step.

\begin{lemma}\label{trappingCantor}
Let $\pair{X_\alpha,\pi_\alpha\sp\beta,\omega_1}$ be an inverse sequence of second countable compact Hausdorff spaces with surjective bounding functions such that its inverse limit is second countable. Then there exists $\gamma<\omega_1$ such that whenever $\gamma<\alpha<\omega_1$ then $\pi_\gamma\sp\alpha$ is a homeomorphism.
\end{lemma}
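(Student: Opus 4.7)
The plan is to show that some $\pi_\gamma:X\to X_\gamma$ is itself a homeomorphism, where $X$ denotes the inverse limit and $\pi_\alpha$ its natural projections. Once this is established, for any $\alpha$ with $\gamma<\alpha<\omega_1$ the cocycle relation $\pi_\gamma=\pi_\gamma\sp\alpha\circ\pi_\alpha$ forces both $\pi_\alpha$ and the bonding function $\pi_\gamma\sp\alpha$ to be continuous bijections of compact Hausdorff spaces (surjectivity of each $\pi_\alpha$ is the standard fact that projections of an inverse limit of compact Hausdorff spaces with surjective bonding maps are surjective), hence homeomorphisms.

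To find such a $\gamma$ I would introduce, for each $\gamma<\omega_1$, the ``kernel set''
$$
K_\gamma=\{(x,y)\in X\times X:\pi_\gamma(x)=\pi_\gamma(y)\}=(\pi_\gamma\times\pi_\gamma)\sp\leftarrow[\Delta_{X_\gamma}].
$$
Each $K_\gamma$ is closed, since $X_\gamma$ is Hausdorff; the family $\{K_\gamma\}_{\gamma<\omega_1}$ is decreasing in $\gamma$ by the cocycle relation; and $\bigcap_{\gamma<\omega_1}K_\gamma=\Delta_X$ because the projections separate points of the inverse limit. In particular, producing a homeomorphism $\pi_\gamma$ amounts to finding $\gamma<\omega_1$ with $K_\gamma=\Delta_X$.

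The main step is now to promote this intersection-wise equality to a single-level one. Here the hypothesis that the inverse limit $X$ is second countable pays off: $X\times X$ is then second countable and hence hereditarily Lindel\"of. The open sets $(X\times X)\setminus K_\gamma$ form an open cover of the Lindel\"of subspace $(X\times X)\setminus\Delta_X$, so there is a countable subcover $\{(X\times X)\setminus K_{\gamma_n}:n<\omega\}$. Setting $\gamma=\sup_{n<\omega}\gamma_n<\omega_1$, monotonicity gives $K_\gamma\subseteq K_{\gamma_n}$ for every $n$, whence $(X\times X)\setminus K_\gamma\supseteq(X\times X)\setminus\Delta_X$ and therefore $K_\gamma=\Delta_X$. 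The main obstacle is precisely this reformulation: once the problem is recast as collapsing a decreasing $\omega_1$-sequence of closed sets to its intersection, hereditary Lindel\"ofness of $X\times X$ is tailor-made to supply a countable cofinal index.
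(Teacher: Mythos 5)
Your argument is correct, and it takes a genuinely different route from the paper's. The paper works directly with a base of the inverse limit: by Lemma~\ref{limits}(i), $X$ has a base of the form $\{\pi_{\beta_n}\sp\leftarrow[U_n]:n<\omega\}$ (using second countability to get a countable subfamily), and then $\gamma=\sup_n\beta_n$ is shown to work by a pointwise argument --- if $\pi_\gamma\sp\alpha$ identified two distinct points of $X_\alpha$, those points would still have to be separated by one of the pulled-back $U_m$, which factors through $\pi_\gamma\sp\alpha$, a contradiction. You instead encode failure of injectivity into the decreasing family of closed kernel sets $K_\gamma\subset X\times X$ and invoke hereditary Lindel\"ofness of $X\times X$ to collapse the $\omega_1$-long intersection $\bigcap_\gamma K_\gamma=\Delta_X$ to a single level $K_\gamma=\Delta_X$; you then deduce that the limit projection $\pi_\gamma$ is a homeomorphism and derive the statement about the bonding maps from the cocycle identity. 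Your version is somewhat more conceptual and "point-free," and the kernel-set reformulation is reusable for any decreasing $\omega_1$-chain of closed equivalence relations with second countable quotient; the paper's version is more hands-on but avoids the detour through $X\times X$ and exhibits explicitly which basic open sets do the separating. Both proofs correctly rest on the same two pillars: second countability of the limit, and the fact that a countable set of countable ordinals has supremum below $\omega_1$.
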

\begin{proof}
Let $\lim_{\leftarrow}{\pair{X_\alpha,\pi_\alpha\sp\beta,\omega_1}}=\pair{X,\pi_\alpha}_{\omega_1}$. By Lemma \ref{limits}, there are $\{\beta_n:n<\omega\}\subset\omega_1$ and open sets $U_n\subset X_{\beta_n}$, for each $n<\omega$, such that $\{\pi_{\beta_n}\sp\leftarrow[U_n]:n<\omega\}$ is a base for the topology of $X$. Let $\gamma=\sup\{\beta_n:n<\omega\}<\omega_1$, we next argue that this ordinal witnesses the property we want.

Let $\gamma<\alpha<\omega_1$ and let us assume that $\pi_\gamma\sp\alpha$ is not a homeomorphism. So there are $x,y\in X_\alpha$ with $x\neq y$ and $\pi_\gamma\sp\alpha(x)=\pi_\gamma\sp\alpha(y)$. Clearly the collection $\{(\pi_{\beta_n}\sp\alpha)\sp\leftarrow[U_n]:n<\omega\}$ is a base for the topology of $X_\alpha$ so there is $m<\omega$ such that $x\in (\pi_{\beta_m}\sp\alpha)\sp\leftarrow[U_m]$ but $y\notin (\pi_{\beta_m}\sp\alpha)\sp\leftarrow[U_m]$. However, $(\pi_\gamma\sp\alpha)\sp\leftarrow\big[(\pi_{\beta_m}\sp\gamma)\sp\leftarrow[U_m]\big]=(\pi_{\beta_m}\sp\alpha)\sp\leftarrow[U_m]$ contradicts the fact that $\pi_\gamma\sp\alpha(x)=\pi_\gamma\sp\alpha(y)$. Then we have finished the proof.
\end{proof}

\section{Proof of Theorem \ref{main}}\label{section-main}

Theorem \ref{main} will be proved using a continuous inverse limit construction of length $\omega_1$. Let us give an informal picture of how the argument proceeds by describing the construction of the spaces $\{X_\alpha:\alpha<\omega_1\}$ used in the inverse system. The base space $X_0$ is the Cantor set. In each successor step $\alpha+1<\omega_1$, we will choose a Cantor set inside $X_\alpha$ and a point in this Cantor set and split it in two to construct $X_{\alpha+1}$. We always choose different points to split so that at the end we have a $\leq$2-to-1 preimage of $X_0$. By doing this, it follows that all Cantor sets in the limit can be found in intermediate stages and can be destroyed by splitting any of its points.

First, we will show to split a point in an specific Cantor set in each step. This is accomplished by the following result.

\begin{lemma}\label{splitting}
Let $\G$ be a countable subgroup of $\homeo{\csf}$ and let $N\subset\csf$ be a countable set that is invariant under $\G$. Let $Y\subset\csf$ be a closed crowded subspace and $y\in Y\setminus(N\cup\fix{\G})$. Then there exists a continuous and irreducible function $\pi:\csf\to\csf$ and a group monomorphism $m:\G\to\homeo{\csf}$ such that the following conditions hold.
\begin{itemize}
\item[(a)] For all $x\in\csf$, $|\pi\sp\leftarrow(x)|\leq 2$.
\item[(b)] If $x\in N$ then $\pi\sp\leftarrow(x)$ is a singleton.
\item[(c)] The set $\{x\in\csf:|\pi\sp\leftarrow(x)|>1\}$ is countable and equal to $\{h(y):h\in \G\}$.
\item[(d)] The set $\pi\sp\leftarrow[Y]$ is crowded.
\item[(e)] For every $g\in \G$ we have that $\pi\circ m(g)=g\circ \pi$.
\end{itemize}
\end{lemma}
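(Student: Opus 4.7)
The plan is to realize $\hat X=\mathsf{Stone}(\hat{\mathcal B})$ as a ``branched double cover'' of $\csf$ whose ramification locus is exactly the $\G$-orbit of $y$, and to lift the $\G$-action via Stone duality. Because $y\notin\fix{\G}$, the $\G$-stabilizer of $y$ is trivial, so the orbit $S=\{h(y):h\in\G\}$ is in natural bijection with $\G$ via $z\mapsto g_z$, where $g_z$ is the unique element of $\G$ with $g_z(y)=z$; one has the cocycle relation $g_{g(z)}=g\circ g_z$. The $\G$-invariance of $N$ together with $y\notin N$ forces $S\cap N=\emptyset$, so (b) will follow from (a) and (c).

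Next I would fix a decreasing clopen neighborhood base $U_0\supsetneq U_1\supsetneq\cdots$ of $y$ with each annulus $U_n\setminus U_{n+1}$ a nonempty clopen (hence infinite), and for each $n$ partition $U_n\setminus U_{n+1}=L_n\sqcup R_n$ into two nonempty clopens---the specific choice is dictated by (d) and is pinned down at the end. Let $\mathcal B=\co{\csf}$ and form $\hat{\mathcal B}$ by freely adjoining generators $\{A_z:z\in S\}$ to $\mathcal B$ subject to
\[
g_z(L_m)\ \le\ A_z\ \le\ g_z(U_0)\wedge\neg\, g_z(R_m)\qquad (m<\omega,\ z\in S).
\]
Then $\hat{\mathcal B}$ is countable, and a direct inspection of its ultrafilters shows each $x\in\csf\setminus S$ has a unique extension to $\hat{\mathcal B}$ while each $z\in S$ has exactly two (one containing $A_z$, the other its complement inside $g_z(U_0)$). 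Setting $\hat X=\mathsf{Stone}(\hat{\mathcal B})$ and taking $\pi:\hat X\to\csf$ as the Stone dual of $\mathcal B\hookrightarrow\hat{\mathcal B}$, items (a), (b), (c) are immediate. The space $\hat X$ is compact metric zero-dimensional and has no isolated points (for $\tilde x$ with $x\notin S$, use closedness and surjectivity of $\pi$ together with the crowdedness of $\csf$; for the split points $z_0,z_1$, use that their basic neighborhoods contain $\pi^\leftarrow[\bigcup_{m\ge n}g_z(L_m)]$ or $\pi^\leftarrow[\bigcup_{m\ge n}g_z(R_m)]$, which are infinite), so $\hat X$ is a Cantor set. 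Irreducibility of $\pi$ follows: every proper closed $A\subsetneq\hat X$ omits some nonempty basic clopen $[b]$, whose image $\pi[[b]]$ is infinite and therefore contains a point $x\in\csf\setminus S$ with $\pi^\leftarrow(x)=\{\tilde x\}\subset[b]\subset\hat X\setminus A$, so $x\notin\pi[A]$.

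For the equivariant structure, each $g\in\G$ acts on $\hat{\mathcal B}$ by $V\mapsto g^{-1}[V]$ on $\mathcal B$ and $A_z\mapsto A_{g^{-1}(z)}$ on the new generators; the cocycle $g_{g^{-1}(z)}=g^{-1}g_z$ ensures that the defining relations are preserved, so this extends to a Boolean automorphism whose Stone dual $m(g)\in\homeo{\hat X}$ satisfies $\pi\circ m(g)=g\circ\pi$, giving (e). Functoriality makes $m$ a group homomorphism, and it is injective because $m(g)=\id$ would fix every $\tilde x$ with $x\in\csf\setminus S$, hence force $g$ to fix a dense set of $\csf$ pointwise, and so $g=\id$.

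The hard part is (d). Crowdedness of $\pi^\leftarrow[Y]$ is automatic at points $\tilde x$ with $x\in Y\setminus S$ since limit sequences in $Y$ lift through the finite-to-one $\pi$. The delicate case is a split point $z_0$ with $z=g_z(y)\in Y\cap S$: a basic clopen neighborhood of $z_0$ has the form $\{z_0\}\cup\pi^\leftarrow\bigl[\bigcup_{m\ge n}g_z(L_m)\bigr]$, so crowdedness at $z_0$ requires $Y\cap\bigcup_{m\ge n}g_z(L_m)\ne\emptyset$ for every $n$, and symmetrically with $R$. Applying $g_z^{-1}$ and varying $z$ consolidates all these conditions into a single family: for every $g$ in the countable set $H=\{h\in\G:h(y)\in Y\}$, both $g^{-1}[Y]\cap\bigcup_{m\ge n}L_m$ and $g^{-1}[Y]\cap\bigcup_{m\ge n}R_m$ are nonempty for every $n$. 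Each $g^{-1}[Y]$ is closed, crowded, and contains $y$, so it meets $U_m\setminus U_{m+1}$ for infinitely many $m$. A bookkeeping argument closes the gap: enumerate $H\times\{L,R\}=\{(g_j,s_j):j<\omega\}$, then inductively choose indices $m_0<m_1<\cdots$ and points $p_j\in g_j^{-1}[Y]\cap(U_{m_j}\setminus U_{m_j+1})$, and partition each such $U_{m_j}\setminus U_{m_j+1}$ into two clopen pieces with $p_j$ placed on the $s_j$-side; any nontrivial clopen partition suffices at the remaining indices.
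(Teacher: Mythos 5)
Your construction is, at its core, the same Stone-duality argument as the paper's: extend $\co{\csf}$ to a countable atomless algebra $\hat{\mathcal B}$ by adjoining one element per point of the orbit $S=\{h(y):h\in\G\}$, use triviality of the $\G$-stabilizer of $y$ (which is exactly $y\notin\fix{\G}$) to set up the cocycle $g_{g(z)}=g\circ g_z$, and lift the $\G$-action through duality. The one substantive presentational difference is that the paper works concretely inside the algebra of regular closed sets of $\csf$ (taking the smallest subalgebra containing $\co{\csf}\cup\{A\}$ and closed under $\G$), and then establishes a structure theorem $(\ast)_C$ by induction on Boolean operations and on $\G$-images, whereas you give $\hat{\mathcal B}$ by generators $\{A_z:z\in S\}$ and relations; the abstract presentation sidesteps the $(\ast)_C$ induction at the cost of verifying the ultrafilter dichotomy directly. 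Both routes lead to the same object.

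There is one small but genuine slip in the bookkeeping for (d). You correctly identify the requirement as: for every $g\in H=\{h\in\G:h(y)\in Y\}$, every $s\in\{L,R\}$, and every $n$, the set $g^{-1}[Y]\cap\bigcup_{m\ge n}s_m$ is nonempty --- i.e.\ $g^{-1}[Y]$ must meet the $s$-side of infinitely many annuli. But enumerating $H\times\{L,R\}=\{(g_j,s_j):j<\omega\}$ and placing one witness $p_j$ on the $s_j$-side of $U_{m_j}\setminus U_{m_j+1}$ gives each pair $(g,s)$ only a single index $m_j$: for larger $n$ you have no guarantee that $g^{-1}[Y]$ meets $\bigcup_{m\ge n}s_m$, since at the unallocated annuli the ``arbitrary'' partition may push all of $g^{-1}[Y]$ to the wrong side. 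The fix is routine --- enumerate $H\times\{L,R\}$ with each pair occurring infinitely often (equivalently, run through $H\times\{L,R\}\times\omega$). The raw material you cite (each $g^{-1}[Y]$ is closed, crowded, contains $y$, hence meets infinitely many annuli) supports this, so the repair costs nothing, but as written the enumeration is insufficient. A cosmetic point in the irreducibility paragraph: you need $\pi[[b]]$ to be uncountable, not merely infinite, in order to escape the countable set $S$; this holds because $[b]$ is an uncountable clopen in a Cantor set and $\pi$ is $\le 2$-to-$1$.
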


\begin{proof}
The set $\csf\setminus\{y\}$ is the union of a pairwise disjoint family $\{U_n:n<\omega\}$ of non-empty clopen subsets. By recursion it is easy to find an infinite $M\subset\omega$ such that for every $h\in \G$ such that $h(y)\in Y$ both sets $\{n\in M:h[U_n]\cap Y\neq\emptyset\}$ and $\{n\in\omega\setminus M:h[U_n]\cap Y\neq\emptyset\}$ are infinite. So define $A=\{y\}\cup(\bigcup\{U_n:n\in M\})$, this is a regular closed set of $\csf$ the boundary of which is $\{y\}$. Let $\C$ be the smallest subalgebra of regular closed sets of $\csf$ containing $\co{\csf}\cup\{A\}$ and closed under all homeomorphisms from $\G$. 

Notice that $\C$ is countable and atomless so its Stone space $X$ is homeomorphic to the Cantor set. By Stone's duality the function $\pi:X\to\csf$ defined by $\pi(\U)=\bigcap{\U}$ is continuous and irreducible, since it is the dual of the dense inclusion of Boolean algebras $\co{\csf}\hookrightarrow\C$. 

Let $E=\{h(y):h\in \G\}$, this is a countable set disjoint from $N$. To prove conditions (a), (b) and (c) we have to analyze what are the preimages of points in $\csf$ under $\pi$. We will do this by describing bases of ultrafilters in $\C$.

Let $A(0)=A$ and $A(1)=\{y\}\cup(\bigcup\{U_n:n\in \omega\setminus M\})$, which is the complement of $A$ in the algebra of regular closed sets of $X$. Let's prove that following statement holds for all $C\in\C$.
\begin{quote}
$(\ast)_{C}$ If $x\in\bd[{(\csf)}]{C}$, there exists $h\in \G$, $U\in\co{\csf}$ and $i\in 2$ such that $x\in U\cap h[A(i)]= U\cap C$ and $x=h(y)$.
\end{quote}

Notice that $(\ast)_C$ is true for all $C\in\co{X}\cup\{A\}$. Considering all other elements of $\C$ can be obtained by Boolean operations and images under homeomorphisms of $\G$, we shall prove that $(\ast)_C$ holds for all $C\in\C$ in the following steps.

\vskip12pt
\noindent{\it Step 1.}  If $(\ast)_C$ then $(\ast)_{C\sp\prime}$, where $C\sp\prime=\cl[(\csf)]{\csf\setminus C}$ is the complement of $C$ in the algebra of regular closed sets.
\vskip12pt

To prove Step 1, notice that the boundary of a regular closed set is the same as the boundary of its complement (in the algebra of regular closed sets). Let $x\in\bd{C\sp\prime}$, then $x\in\bd{C}$ as well. Thus there are $h\in \G$, $U\in\co{\csf}$ and $i\in 2$ with $x=h(y)$ and $x\in U\cap h[A(i)]=U\cap C$. Notice that $h[A(1-i)]=\{x\}\cup(\csf\setminus h[A(i)])$ because homeomorphisms respect boundaries. Also, since the boundary of $A$ consists on one point only, $\bd[(\csf)]{C}\cap U=\{x\}$. From this considerations it easily follows that $x\in U\cap h[A(1-i)]=U\cap C\sp\prime$.

\vskip12pt
\noindent{\it Step 2.}  If $(\ast)_{C_0}$ and $(\ast)_{C_1}$ then $(\ast)_{C_0\cup C_1}$.
\vskip12pt

For step 2, let $x\in\bd[(\csf)]{C_0\cup C_1}$. First, it may be the case that $x\in C_j\setminus C_{1-j}$ for some $j\in 2$. In this case, there exists $h\in \G$, $U\in\co{\csf}$ and $i\in 2$ such that $x\in U\cap h[A(i)]= U\cap C_j$ and $x=h(y)$. By shrinking if necessary, we may assume that $U\cap C_{i-j}=\emptyset$. Then $x\in U\cap h[A(i)]= U\cap (C_0\cup C_1)$, which implies $(\ast)_{C_0\cup C_1}$.

So assume that $x\in C_0\cap C_1$. Then $x\in\bd[(\csf)]{C_j}$ for $j\in 2$ (otherwise, it would be an interior point) so we have witnesses $h_j\in \G$, $U_j\in\co{\csf}$ and $i(j)\in 2$ such that $x\in U_j\cap h[A(i(j))]= U_j\cap C_j$ and $x=h_j(y)$ for $j\in 2$. Since $x=h_0(y)=h_1(y)$, $h_0\circ h_1\sp{-1}\in \G$ has $y$ as a fixed point. Then by our hypothesis, $h_0=h_1$. 

Let $V=U_0\cap U_1$, then $V\cap h[A(i(0)]=V\cap C_0$ and $V\cap h[A(i(1)]=V\cap C_1$. So there are two cases: $i(0)=i(1)$ or $i(0)+i(1)=1$, let us see that the second case is impossible. If we had $i(0)+i(1)=1$, then since $A(0)\cup A(1)=\csf$ we would have $V=(V\cap C_0)\cup (V\cap C_1)$. So $x\in V\subset C_0\cup C_1$ and $x$ would be an interior point of $C_0\cup C_1$, a contradiction. Thus, we have $i(0)=i(1)$.

From the discussion above it is then clear that $x\in V\cap h_0[A(i(0))]=V\cap (C_0\cup C_1)$.

\vskip12pt
\noindent{\it Step 3.}  If $(\ast)_{C}$ and $h\in \G$ then $(\ast)_{h[C]}$.
\vskip12pt

For step 3, assume that $x\in\bd[(\csf)]{h[C]}$. Since homeomorphisms preserve boundaries, $h\sp{-1}(x)\in\bd[(\csf)]{C}$ so let $h\sp\prime\in \G$, $U\in\co{\csf}$ and $i\in 2$ such that $h\sp{-1}(x)\in U\cap h\sp\prime[A(i)]= U\cap C$ and $h\sp{-1}(x)=h\sp\prime(y)$. Let $V=h[U]\in\co{\csf}$ and $h\sp{\prime\prime}=h\circ h\sp\prime\in \G$, by applying $h$ on the previous equation we obtain $x\in V\cap h\sp{\prime\prime}(A(i))= V\cap h[C]$.
\vskip12pt

Thus, $(\ast)_C$ is true for all $C\in\C$.\vskip10pt 

Let $\U\in X$ and $x=\pi(\U)$. Using $(\ast)_C$ it follows that $\U$ has a base of one of the following forms. If $x\in E$, then $\{U\cap h[A(i)]:x\in U\}$ is a base of $\U$, where $h$ is the unique element of $\G$ such that $x=h(y)$ and $i\in 2$. If $x\in\csf\setminus E$, then $\{U\in\co{\csf}:x\in U\}$ is a base of $\U$. By fixing $x$, in the first case we see that there are exactly two choices of $\U$, one for each $i\in 2$; in the second case such $\U$ is unique. This completes the proof of (a), (b) and (c).

Next we prove (d). Let $Z=Y\setminus E$, this is a dense subset of $Y$ because $E$ is countable and $Y$ is a Cantor set. Since $\pi$ is one-to-one restricted to $\pi\sp\leftarrow[\csf\setminus E]$ and closed, the restriction $\pi\!\!\restriction_{\pi\sp\leftarrow[Z]}:\pi\sp\leftarrow[Z]\to Z$ is a homeomorphism. Thus, $\pi\sp\leftarrow[Z]$ is crowded. So let $x\in E\cap Y$, by the arguments of the previous paragraph there is $h\in \G$ such that $\{U\cap h[A(i)]:x\in U\}$ for $i\in 2$ generate the elements of $\pi\sp\leftarrow(x)$. By the choice of $A$ above and the fact that $Z$ is dense in $Y$, $(U\cap h[A(i)])\cap Z\neq\emptyset$ for $i\in 2$. Thus, if $\U\in\pi\sp\leftarrow(x)$ then every neighborhood of $\U$ intersects $\pi\sp\leftarrow[Z]$. This proves that $\pi\sp\leftarrow[Y]$ is crowded.

We are now left to prove (e). Each homeomorphism $h$ in $\G$ induces an isomorphism $\hat{h}$ of the Boolean algebra $\co{\csf}$. Moreover, $\hat{h}$ is also an isomorphism of $\C$ by the definition of $\C$. Thus, by Stone duality, $\hat{h}$ induces a homeomorphism $m(h)$. The proof that $\pi\circ m(h)=h\circ \pi$ is standard and we leave it to the reader.
\end{proof}

The hardest part in this argument is to do the splitting and at the same time ensuring that the limit space is \CDH{}. Thus, we would also like to choose homeomorphisms that witness this and make sure that they are preserved in the limit. The precise statement of this is as follows.

\begin{lemma}\label{thelemma}
Let $\G$ be a countable subgroup of $\homeo{\csf}$ and $D,E$ two countable dense subsets of $\csf$. If $\G$ is cofinitary, then there is $H\in\homeo{\csf}$ such that $H[D]=E$ and $\gen{\G\cup\{H\}}$ is also cofinitary.
\end{lemma}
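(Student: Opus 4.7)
The plan is to construct $H$ by a back-and-forth fusion argument, realizing $H$ as the unique homeomorphism determined by a coherent sequence $(\mathcal{P}_n,\sigma_n)_{n<\omega}$, where $\mathcal{P}_n$ is a finite clopen partition of $\csf$ (with mesh tending to $0$), $\sigma_n$ is a permutation of $\mathcal{P}_n$, $\mathcal{P}_{n+1}$ refines $\mathcal{P}_n$, and $\sigma_{n+1}$ refines $\sigma_n$ in the sense that for each $P\in\mathcal{P}_n$, $\sigma_{n+1}$ induces a bijection from $\{Q\in\mathcal{P}_{n+1}:Q\subset P\}$ onto $\{Q\in\mathcal{P}_{n+1}:Q\subset \sigma_n(P)\}$. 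Such data specifies an $H\in\homeo{\csf}$ via $H[P]=\sigma_n(P)$ for each $P\in\mathcal{P}_n$ and each $n$. Fix enumerations $D=\{d_n:n<\omega\}$, $E=\{e_n:n<\omega\}$, $\G=\{g_n:n<\omega\}$, and an enumeration $\{w_n:n<\omega\}$ of all formal reduced expressions $g_{i_0}H^{\epsilon_1}g_{i_1}\cdots H^{\epsilon_k}g_{i_k}$ with $\epsilon_j\in\{\pm 1\}$; these are the candidate ``words'' to be controlled.

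At stage $n$ I would refine so as to achieve three requirements: (i) $\mathcal{P}_n$ is invariant under $g_0,\ldots,g_n$, so that each $g_i$ permutes the pieces of $\mathcal{P}_n$ and therefore each word $w_i$ of appropriate length induces a well-defined permutation $\bar w_i$ of $\mathcal{P}_n$; (ii) the usual back-and-forth bookkeeping is enforced, alternating between matching a new element of $D$ with a suitable element of $E$ and matching a new element of $E$ with one of $D$, so that density of $D$ and $E$ will guarantee $H[D]=E$ in the limit; and (iii) for each $w_i$ with $i\leq n$ (that has not yet been declared trivial), the collection of ``fixed pieces'' $\{P\in\mathcal{P}_n:\bar w_i(P)=P\}$ is made as small as possible, with any surviving fixed pieces forced to shrink in diameter.

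The heart of the argument is (iii), and the key sub-claim I would need is: given $(\mathcal{P}_n,\sigma_n)$ and a word $w$ which does not collapse to the identity, there is a refinement $(\mathcal{P}_{n+1},\sigma_{n+1})$ for which $\bar w$ has strictly fewer fixed pieces than $\bar w$ did in $\mathcal{P}_n$. The cofinitary hypothesis on $\G$ enters precisely here: if some piece $P$ admitted no such refinement, then tracking the orbit of points of $P$ through the alternation of $H$-moves and $\G$-moves prescribed by $w$ would ultimately force some non-identity element of $\G$ (arising as an appropriate combination of the $g_i$'s appearing in $w$) to fix a whole clopen set, contradicting cofinitary-ness. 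Iterating the sub-claim against a priority enumeration of the $w_n$, one obtains in the limit that for each non-identity $w_i$ the number of fixed pieces stabilizes at a finite value, so that $\fix{w_i}$ is finite as required.

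The main obstacle I anticipate is the bookkeeping interaction between the three requirements: a refinement chosen to break a fixed piece of one word may create new fixed pieces of another, and the matching assignments $d_n\mapsto e_{m(n)}$ forced by the back-and-forth constrain the permutations $\sigma_n$ on specific pieces. A standard priority argument should resolve the first issue, and inside any clopen piece of $\mathcal{P}_n$ there should be enough room to simultaneously route the next $D$-$E$ pair and perform the surgery needed to break the next fixed piece. The most delicate step, I expect, is verifying the sub-claim itself, i.e.\ showing that cofinitary-ness of $\G$ genuinely suffices to rule out ``unbreakable'' fixed pieces of non-trivial words; this requires a careful analysis of how a persistent fixed point of $w$ algebraically constrains the action of the $g_i$'s appearing in $w$, and it is presumably here that the proof becomes long and technical.
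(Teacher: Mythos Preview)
Your framework has a genuine gap at requirement (i). You ask that the clopen partition $\mathcal{P}_n$ be invariant under $g_0,\ldots,g_n$, so that each $g_i$ permutes its pieces and each word $w_i$ induces a well-defined permutation $\bar w_i$ of $\mathcal{P}_n$; the entire notion of ``fixed piece'' rests on this. But for a general cofinitary $\G\subset\homeo{\csf}$ such partitions need not exist with small mesh. Suppose $g\in\G$ is totally minimal, meaning $g^k$ is minimal for every $k\geq 1$ (such homeomorphisms of $\csf$ exist, and the cyclic group one generates is cofinitary since $\fix{g^k}=\emptyset$ for all $k$). Any $g$-invariant clopen partition, being a union of $g$-orbits of pieces, decomposes into cycles $\{P,g[P],\ldots,g^{k-1}[P]\}$ with $g^k[P]=P$; but $P$ is then a closed $g^k$-invariant set, so minimality of $g^k$ forces $P=\csf$. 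Hence the only $g$-invariant clopen partition is the trivial one, and your construction cannot get past stage $0$. Even when each $g_i$ individually admits fine invariant partitions, there is no mechanism producing a \emph{common} one for non-commuting $g_0,\ldots,g_n$. Without (i), ``$\bar w_i$'' is undefined and the scheme collapses.

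The paper avoids this by working analytically rather than combinatorially. It builds $H$ as the limit of a Cauchy sequence $(h_n)$ in the complete metric $\sigma$ on $\homeo{\csf}$, using that for each fixed word $\hat f$ the evaluation $h\mapsto\hat f[h]$ is continuous. At each stage one perturbs $h_k$ by a small amount so that $\fix{\hat f[h_{k+1}]}$ is trapped inside a prescribed small neighbourhood of the finite set $\fix{\hat f[\varphi_k]}$ of \emph{forced} fixed points created by the back-and-forth partial bijection $\varphi_k\subset D\times E$. Note also that the cofinitary hypothesis on $\G$ is not used the way you describe: it plays essentially no role in the construction of $H$ itself, and is needed only so that the pure-$\G$ elements of $\gen{\G\cup\{H\}}$ already have finite fixed-point sets. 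The genuine technical core is an induction on the complexity of words (the paper's Claim~3), in which shorter subwords have already had their fixed points confined, and this is what permits the local surgery on $h$ to confine those of the current word.
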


The proof of Lemma \ref{thelemma} is hard, mainly because the chosen homeomorphisms are required to form a cofinitary group. We need this because by Lemma \ref{splitting}, in each successor step we are to choose a point $y\in Y$ not in $\fix{\G}$, where $\G$ is the group of homeomorphisms chosen up to that step, so $\fix{\G}$ shouldn't be able to cover the Cantor set $Y$. We will leave the proof of this result for the next chapter and we will concentrate on the remaining of the proof.

\begin{proof}[Proof of Theorem \ref{main}]
The space we are looking for will be constructed as an inverse limit of a sequence $\pair{X_\alpha,\rho_\alpha\sp\beta,\omega_1}$ where $X_\alpha=\csf$ for each $\alpha<\omega_1$.

Notice that an inverse system of Cantor sets of length a countable limit ordinal always gives the Cantor set as an inverse limit. Thus, we only need to specify the bonding functions $\rho_\alpha\sp{\alpha+1}:X_{\alpha+1}\to X_\alpha$ for $\alpha<\omega_1$ by using Lemma \ref{splitting}. These functions will tell us how to split points in order to destroy the Cantor sets. 

In each step $\alpha<\omega_1$ we shall define a countable set $N_\alpha\subset X_\alpha$ to keep track of which points are not to be split in former steps. We also need to know which Cantor set to destroy in each step, this will be done by choosing a Cantor set $F_\alpha\subset X_\alpha$ for each $\alpha<\omega_1$.

We will also need to construct a countable subgroup $\G_\alpha\subset\homeo{X_\alpha}$ for each $\alpha<\omega_1$ and a group monomorphism $m_\alpha\sp\beta:\G_\alpha\to \G_\beta$ for each $\alpha<\beta<\omega_1$. Using this homeomorphisms we will prove that the limit is \CDH{}.

We already know that every space in the sequence is a Cantor set, so for each $\alpha<\omega_1$ let us give enumerations $\{D\pair{\alpha,\beta}:\beta<\omega_1\}$ of all countable dense subsets of $X_\alpha$ and $\{Y\pair{\alpha,\beta}:\beta<\omega_1\}$ of all Cantor sets contained in $X_\alpha$. Let $e:\omega_1\to\omega_1\times\omega_1$ be a bijection such that if $\alpha<\omega_1$ and $e(\alpha)=\pair{\beta,\gamma}$, then $\beta\leq\alpha$. Let $D_0=D(e(0))\subset X_0$.

Our construction will have the following properties:
\begin{itemize}
\item[(1)] $N_0=D_0$;
\item[(2)] for each $\alpha<\omega_1$ and $h\in \G_\alpha$, $h[N_\alpha]=N_\alpha$;
\item[(3)] if $\alpha<\beta<\omega_1$ then
\begin{itemize}
\item[(a)] $\rho_\alpha\sp\beta$ is continuous and irreducible (thus, onto),
\item[(b)] for each $x\in X_\alpha$, $|(\rho_\alpha\sp\beta)\sp\leftarrow(x)|\leq 2$,
\item[(c)] $\{x\in X_\alpha:|(\rho_\alpha\sp\beta)\sp\leftarrow(x)|=2\}$ is countable, and
\item[(d)] $N_\alpha\subset\{x\in X_\alpha:|(\rho_\alpha\sp\beta)\sp\leftarrow(x)|=1\}$;
\end{itemize}
\item[(4)] if $\alpha<\omega_1$ and $e(\alpha)=\pair{\beta,\gamma}$, then there exists $h\in \G_{\alpha+1}$ such that\\ \mbox{$h[(\rho_\beta\sp{\alpha+1})\sp\leftarrow[D(\beta,\gamma)]]=(\rho_0\sp{\alpha+1})\sp\leftarrow[D_0]$};
\item[(5)] if $\alpha<\omega_1$ and $e(\alpha)=\pair{\beta,\gamma}$, then $(\rho_\beta\sp{\alpha+1})\sp\leftarrow[D(\beta,\gamma)]\subset N_{\alpha+1}$;
\item[(6)] if $\alpha<\beta<\omega_1$, then $(\rho_\alpha\sp\beta)\sp\leftarrow[N_\alpha]\subset N_\beta$;
\item[(7)] if $\alpha<\omega_1$, the group $\G_\alpha$ is cofinitary;
\item[(8)] if $\alpha<\beta<\gamma<\omega_1$ then $m_\alpha\sp\gamma=m_\beta\sp\gamma\circ m_\alpha\sp\beta$; and
\item[(9)] if $\alpha<\omega_1$ and $e(\alpha)=\pair{\beta,\gamma}$ then
\begin{itemize}
\item[(a)] $F_\alpha$ is a Cantor set contained in $X_\alpha$,
\item[(b)] $F_\alpha\subset(\rho_\beta\sp\alpha)\sp\leftarrow[Y(\beta,\gamma)]$,
\item[(c)] $(\rho_\beta\sp\alpha)\sp\leftarrow[Y(\beta,\gamma)]\setminus F_\alpha$ is countable,
\item[(d)] $(\rho_\alpha\sp{\alpha+1})\sp\leftarrow[F_\alpha]$ is crowded, and
\item[(e)] there is $x\in F_\alpha$ such that $|(\rho_\alpha\sp{\alpha+1})\sp\leftarrow(x)|=2$.
\end{itemize}
\end{itemize}

Let us describe how to carry out this construction in step $\lambda<\omega_1$. For $\lambda=0$, define $N_0=D_0$ (which is property (1)) and $\G_0=\{\id\}$.

Now consider the case when $\lambda$ is a limit ordinal. As mentioned before, the inverse limit of countably many Cantor sets is a Cantor set so there are continuous functions $\rho_\alpha\sp\lambda:X_\lambda\to X_\alpha$ for $\alpha<\lambda$ such that $\pair{X_\lambda,\rho_\alpha\sp\lambda}_\lambda=\lim_{\leftarrow}{\pair{X_\alpha,\rho_\alpha\sp\beta,\lambda}}$. By (iii) in Lemma \ref{limits} it is easy to see that $\rho_\alpha\sp\lambda$ is irreducible for all $\alpha<\lambda$.

Define $N_\lambda=\bigcup\{(\rho_\alpha\sp\lambda)\sp\leftarrow[N_\alpha]:\alpha<\lambda\}$. By property (3d) it is easy to see that $|(\rho_\alpha\sp\lambda)\sp\leftarrow(x)|=1$ when $\alpha<\lambda$ and $x\in N_\alpha$. Thus, $N_\lambda$ is a countable set.

Using (iv) in Lemma \ref{limits}, it is not hard to define group monomorphisms $m_\alpha\sp\lambda:\G_\alpha\to\homeo{X_\lambda}$ such that condition (8) holds for $\gamma=\lambda$. Then define $\G_\lambda=\bigcup\{m_\alpha\sp\lambda[\G_\alpha]:\alpha<\lambda\}$, which is a group already (that is, we do not have to take the generated group) and clearly countable. To see that $\G_\lambda$ is cofinitary, let $h\in \G_\lambda$. So there is $\alpha<\lambda$ and $g\in \G_\alpha$ such that $h=m_\alpha\sp\lambda(g)$. If $x\in\fix{\{h\}}$, then clearly $\rho_\alpha\sp\lambda(x)\in\fix{\{g\}}$. This means that $\fix{\{h\}}\subset(\rho_\alpha\sp\lambda)\sp\leftarrow[\fix{\{g\}}]$, which is clearly a finite set because preimages of points under $\rho_\alpha\sp\lambda$ are finite.

We will leave the verification of the rest of the properties for this step to the reader. Next we do the successor case, so assume that $\lambda=\eta+1$ and let $e(\eta)=\pair{\alpha,\beta}$. 

In this step we would like to destroy $Y(\alpha,\beta)$, but this is a subset of $X_\alpha$ and we want to split a point in $X_\eta$. Notice that it is possible that $(\rho_\alpha\sp\eta)\sp\leftarrow[Y(\alpha,\beta)]$ contains isolated points so we have to choose one that is not isolated. The set $Z=\{x\in Y(\alpha,\beta):|(\rho_\alpha\sp\eta)\sp\leftarrow(x)|=1\}$ is a cocountable subset of $Y(\alpha,\beta)$ by property (3) so it is crowded. Notice that the function $\rho_\alpha\sp\lambda\!\!\restriction_{(\rho_\alpha\sp\lambda)\sp\leftarrow[Z]}:(\rho_\alpha\sp\lambda)\sp\leftarrow[Z]\to Z$ is closed, one-to-one and continuous so it is a homeomorphism. Thus, $(\rho_\alpha\sp\lambda)\sp\leftarrow[Z]$ is crowded. Let $F_\eta=\cl[X_\eta]{(\rho_\alpha\sp\lambda)\sp\leftarrow[Z]}$, this is the Cantor set we will destroy. Notice that properties (9a), (9b) and (9c) hold for this choice.

Let $y\in X_\eta\setminus(\fix{\G_\eta}\cup N_\beta)$, by Lemma \ref{splitting} there exists an irreducible and continuous function $\rho_\eta\sp{\eta+1}:X_{\eta+1}\to X_\eta$ and a group monomorphism $m_\eta\sp{\eta+1}:\G_\eta\to\homeo{X_{\eta+1}}$ with the properties listed in that lemma. Notice that we can now define $\rho_\gamma\sp{\eta+1}=\rho_\eta\sp{\eta+1}\circ\rho_\gamma\sp\eta$ for all $\gamma<\eta+1$ and it is easy to see that properties in (3) hold for all these functions. Also, by the conditions in Lemma \ref{splitting}, we obtain properties (9d) and (9e).

Consider the sets $D=(\rho_0\sp{\eta+1})\sp\leftarrow[D_0]$ and $D\sp\prime=(\rho_\alpha\sp{\eta+1})\sp\leftarrow[D(e(\alpha))]$. Both are countable sets by property (3c) and dense because the functions considered are irreducible. Moreover, $\rho_0\sp{\eta+1}\!\!\restriction_{D\sp\prime}:D\sp\prime\to D_0$ is one-to-one. By Lemma \ref{thelemma}, there exists $h\in\homeo{X_{\eta+1}}$ such that $h[D]=D\sp\prime$ and $\gen{m_\eta\sp{\eta+1}[\G_\eta]\cup\{h\}}$ is cofinitary. 

Define $\G_{\eta+1}=\gen{m_\eta\sp{\eta+1}[\G_\eta]\cup\{h\}}$. Let $N_{\eta+1}$ be the smallest set containing the set $(\rho_\eta\sp{\eta+1})\sp\leftarrow[N_\eta\cup\{y\}]\cup D_0$ and closed under $\G_{\eta+1}$, clearly we obtain a countable set. The rest of the properties in the construction can be easily checked.

So the space we are looking for is the inverse limit of the sequence we are constructing. To be precise, this space is
$$
X=\left\{x\in\prod_{\alpha<\omega_1}{X_\alpha}:\forall\alpha<\beta<\omega_1\ (\pi_\alpha(x)=\rho_\alpha\sp\beta(\pi_\beta(x)))\right\},
$$
where $\pi_\alpha:X\to X_\alpha$ be the projection into the factor $\alpha<\omega_1$. Clearly $X$ is a compact Hausdorff space. To see that $X$ has uncountable weight, use Lemma \ref{trappingCantor}; however, this will also be clear once we prove that $X$ contains no Cantor sets.  

An important property of $X$ that follows from properties (3) and (6) is the following
\begin{quote}
$(\ast)$ For every $x\in X$ there exists an ordinal $\alpha(x)<\omega_1$ such that if $\alpha(x)<\beta<\omega_1$, then $(\pi_\beta)\sp\leftarrow[\pi_\beta(x)]=\{x\}$
\end{quote}
Informally, every point is split into two points in at most one step. In particular, $\pi_0$ has fibers of cardinality at most $2$ so by Lemma \ref{hersep}, $X$ is hereditarily separable. To see that $X$ is first countable in $x\in X$, consider the ordinal $\alpha(x)$ given in $(\ast)$. Using Lemma \ref{1stctble} it is possible to construct a countable base of $X$ at $x$ using the base of $X_{\alpha(x)}$ at $\pi_\alpha(x)$.

Next, let us show that $X$ is \CDH{}. Notice that $D=(\pi_0)\sp\leftarrow[D_0]$ is a countable dense subset of $X$ and $\pi_0\!\!\restriction_D:D\to D_0$ is one-to-one. Let $E$ be any other countable dense subset of $X$. By $(\ast)$, it is possible to find $\alpha<\omega_1$ such that $\pi_\alpha:E\to\pi_\alpha[E]$ is one-to-one. Let $\beta<\omega_1$ be such that $D(e(\beta))=\pi_\alpha[E]$. By property $(4)$ there is $h\in\homeo{X_{\beta+1}}$ such that $h[(\rho_\alpha\sp{\beta+1})\sp\leftarrow[D(e(\beta))]]=(\rho_0\sp{\beta+1})\sp\leftarrow[D_0]$. By using (iv) in Lemma \ref{limits}, it is not hard to see that the homeomorphisms $\{m_{\beta+1}\sp\gamma:\beta<\gamma\}$ induce a homeomorphism $H\in\homeo{X}$ such that $m_{\beta+1}\sp\gamma(h)\circ \pi_\gamma=\pi_\gamma\circ H$ when $\beta<\gamma<\omega_1$. Then it easily follows that $H[E]=D$. Thus, $X$ is \CDH{}.

Finally, we prove that $X$ contains no Cantor sets. Assume that this is not true and there is $Y\subset X$ homeomorphic to $\csf$ and let $Y_\alpha=\pi_\alpha[Y]$ for every $\alpha<\omega_1$. So $\pair{Y,\pi_\alpha\!\!\restriction_{Y}}_{\omega_1}=\lim\pair{Y_\alpha,(\rho_\alpha\sp\beta)\!\!\restriction_{Y_\alpha},\omega_1}$ by (ii) in Lemma \ref{limits}. By Lemma \ref{trappingCantor}, there is $\lambda<\omega_1$ such that $\rho_\lambda\sp\alpha\!\!\restriction_{Y_\alpha}:Y_\alpha\to Y_\lambda$ is a homeomorphism every time $\lambda\leq\alpha<\omega_1$.

Let $\beta<\omega_1$ be such that $e(\beta)=\pair{\lambda,\gamma}$ and $Y_\gamma=Y(\lambda,\gamma)$. By property (9), $F_\beta$ is the biggest Cantor set contained in $(\rho_\lambda\sp\beta)\sp\leftarrow[Y_\lambda]$ so $Y_\beta\subset F_\beta$. Now take any open set $V$ that intersects $F_\beta$. By property (3c) there is $x\in V\cap F_\beta$ such that the preimage of $\rho_\lambda\sp\beta(x)$ under $\rho_\lambda\sp\beta$ consists on $x$ alone. By the fact that $\rho_\lambda\sp\beta\!\!\restriction_{Y_\beta}:Y_\beta\to Y_\lambda$ is onto and $\rho_\lambda\sp\beta(x)\in Y_\lambda$, we necessarily have that $x\in Y_\beta$. This proves that $Y_\beta$ is dense in $F_\beta$ so in fact $Y_\beta=F_\beta$.

By property (9d), $(\rho_\beta\sp{\beta+1})\sp\leftarrow[Y_\beta]$ is crowded. We proceed by an argument completely similar to the one in the previous paragraph. Since $\rho_\beta\sp{\beta+1}$ is one-to-one in a cocountable set, and both $(\rho_\beta\sp{\beta+1})\sp\leftarrow[Y_\beta]$ and $Y_{\beta+1}$ map to $Y_\beta$ under $\rho_\beta\sp{\beta+1}$, it can be easily proved that $(\rho_\beta\sp{\beta+1})\sp\leftarrow[Y_\beta]=Y_{\beta+1}$. But then property (9e) contradicts the straightforward fact that $\rho_\beta\sp{\beta+1}:Y_{\beta+1}\to Y_\beta$ is a homeomorphism.

The contradiction we have arrived to shows that there are indeed no Cantor sets in $X$. This finishes the proof of the theorem.
\end{proof}

\section{Proof of Lemma \ref{thelemma}}\label{section-lemma}

In this section, we will give a detailed proof of our Lemma \ref{thelemma} that helps extend cofinitary groups. We start with some general facts about groups of homeomorphisms.

 \begin{sclaim}\label{near_no_fixed}
  Let $\pair{X,\rho}$ be a compact metric space, $A\subset X$ a closed subset and $f\in\homeo{X}$. Assume that $A\cap\fix{f}=\emptyset$. Then there exists $\epsilon>0$ such that if $g\in\homeo{X}$ and $\sigma_\rho(f,g)<\epsilon$, then $A\cap\fix{g}=\emptyset$.
 \end{sclaim}
 
 In what follows below, $X=\csf$, $\rho$ will the metric defined by $\rho(x,y)=1/(1+\min\{n<\omega:x(n)\neq y(n)\})$ for $x\neq y$ and we will denote $\sigma=\sigma_\rho$ for this fixed metric $\rho$. We are choosing this metric so that open balls $B(x,\epsilon)=\{y\in\csf:\rho(x,y)<\epsilon\}$, where $x\in\csf$ and $\epsilon>0$, are clopen. We will use the following consequence of fact \ref{near_no_fixed}.
 
 \begin{sclaim}\label{far_preserved}
  Let $U$ and $V$ be clopen subsets of $\csf$ and $f\in\homeo{\csf}$ such that $f[U]=V$. Then there is $\epsilon>0$ such that if $g\in\homeo{\csf}$ is such that $\sigma(f,g)<\epsilon$, then $g[U]=V$.
 \end{sclaim}
 
 In order to prove Lemma \ref{thelemma} we will construct the homeomorphism $H$ in $\omega$ steps. We shall define a Cauchy sequence of homeomorphisms $\{h_n:n<\omega\}\subset\homeo{\csf}$ which will converge to the homeomorphism $H$ we want. That is,
 \begin{itemize}
   \item[(a)] if $m<n<\omega$, $\sigma(h_n,h_m)<\frac{1}{2\sp{m}}$,
 \end{itemize}
 In each step, we will make two promises.
 
  As it is usual in the construction of CDH spaces, we will promise a definition of $H$ restricted to some finite subset of $D$. Let $D=\{d_i:i<\omega\}$ and $E=\{e_i:i<\omega\}$ be enumerations. Thus, in step $n<\omega$ of the construction we will define two finite sets $D_n\in[D]\sp{<\omega}$ and $E_n\in[E]\sp{<\omega}$, and a bijection $\varphi_n:D_n\to E_n$. Then, we will have the following conditions:
 \begin{itemize}
  \item[(b)] for all $n<\omega$, $\{d_i:i< n\}\subset D_{2n}$ and $\{e_i:i<n\}\subset E_{2n+1}$,
  \item[(c)] $\varphi_m\subset\varphi_n$, if $m<n<\omega$, and
  \item[(d)] $\varphi_n\subset h_n$, if $n<\omega$.
 \end{itemize}
 
 The other promise we make in a step is that some element of $\gen{\G\cup\{H\}}$ will only have finitely many fixed points. Thus, we need to enumerate the elements of $\gen{\G\cup\{H\}}$ in advance.
 
 Let $\hat{h}$ be a symbol. We will need to consider the free group generated by $\G$ and $\hat{h}$, which is denoted by $\G[\hat{h}]$ and consists of all non-empty, finite reduced words from the alphabet $(\G\setminus\{\id\})\cup\{\hat{h},\hat{h}\sp{-1}\}$. Here reduced means cancelling $\hat{h}$ and $\hat{h}\sp{-1}$ every time that they are found adjacent. We recall that given any alphabet, there always exists an empty word (different from the empty set), defined to be of length $0$. For our convenience, we will use the empty word in some parts below but we do not include it in the set $\G[\hat{h}]$.
 
 Thus, $\G[\hat{h}]$ can be defined in the following recursive way. First, all elements of $(\G\setminus\{\id\})\cup\{\hat{h},\hat{h}\sp{-1}\}$ are words of length 1. Assume that $\hat{f}$ is a word of length $n<\omega$ and $\hat{f}=\hat{\alpha}\hat{\beta}$, where $\hat{\alpha}$ is a word of length 1 and $\hat{\beta}$ may be the empty word. If $\hat{\alpha}\neq\hat{h}$, then $\hat{h}\sp{-1}\hat{f}$ is a word of length $n+1$. If $\hat{\alpha}\neq\hat{h}\sp{-1}$, then $\hat{h}\hat{f}$ is a word of length $n+1$. Finally, if $\hat{\alpha}\in\{\hat{h},\hat{h}\sp{-1}\}$ and $g\in\G$, then $g\hat{f}$ is a word of length $n+1$.
 
 Given a word $\hat{f}$ of length $n<\omega$, sometimes we will truncate $\hat{f}$ to a certain length. If $1\leq m\leq n$, we will define $\hat{f}_m\in\G[\hat{h}]$ to be the word of length $m$ such that $\hat{f}=\hat{\alpha}\hat{f}_m$ is a reduced expresion. Also, $\hat{f}_0$ will be defined to be the empty word.
 
 Consider $\hat{f}\in\G[\hat{h}]$ and $h\in\homeo{\csf}$. Then $\hat{f}[h]$ will denote the evaluation defined in the obvious way, namely, replace each occurrence of $\hat{h}$ with $h$ and each occurrence of $\hat{h}\sp{-1}$ with $h\sp{-1}$ and evaluate the composition. If $\hat{f}$ is the empty word, then $\hat{f}[h]$ denotes the identity map. We highlight the following observation.
 
 \begin{sclaim}\label{words_continuous}
 For every fixed $\hat{f}$, the map $h\mapsto\hat{f}[h]$ is continuous in the topology of $\homeo{\csf}$. 
 \end{sclaim}
 
 If $\psi$ is any bijection, we can also define $\hat{f}[\psi]$ in a similar way. Notice that in this general case, $\hat{f}[\psi]$ might be the empty function. 
 
 We need another important observation. Let $f,g\in\homeo{\csf}$. Then $\fix{f}=g[\fix{g\sp{-1}\circ f\circ g}]$. This implies that $f$ will have finitely many fixed points if and only if $g\sp{-1}\circ f\circ g$ has finitely many fixed points. Thus, it is not necessary to consider all words of $\G[\hat{h}]$, since it will be enough to check only some in order to obtain a cofinitary generated group.
 
 First, we will define when a word $\hat{f}\notin\G$ is \emph{short}. It is easier to define that $\hat{f}$ is \emph{not} short if $\hat{f}=\hat{\beta}\sp{-1}\hat{\alpha}\hat{\beta}$ in its reduced form, for some word $\hat{\beta}$ of length $1$.  We will also say that $\hat{f}\in \G[\hat{h}]\setminus\G$ is \emph{nice} if it is short and $\hat{f}=\hat{\alpha}\hat{h}$ in its reduced form. The distinction between short and nice will be important. Consider the following operations:
 \begin{enumerate}[label=(\roman*)]
  \item If $\hat{f}=\hat{\beta}\sp{-1}\hat{\alpha}\hat{\beta}$ for some word $\beta$ of length $1$, replace $\hat{f}$ with $\hat{\alpha}$.
  \item If $\hat{f}=g_1\hat{\alpha}g_0$ for $g_0,g_1\in\G$ and $g_0\neq g_1\sp{-1}$, replace $\hat{f}$ with $(g_0\circ g_1)\hat{\alpha}$.
  \item If $\hat{f}=g\hat{\alpha}\hat{h}\sp{-1}$ for some $g\in G$, replace $\hat{f}$ with $\hat{h}\sp{-1}g\hat{\alpha}$.
  \item If $\hat{f}=\hat{h}\sp{-1}\hat{\alpha}g$ for some $g\in G$, replace $\hat{f}$ with its inverse $\hat{f}\sp{-1}=g\sp{-1}\hat{\alpha}\sp{-1}\hat{h}$.
  \item If $\hat{f}=\hat{\alpha}\hat{h}$, do nothing.
 \end{enumerate}
  Let us know describe an algorithm to simplify words. Start with $\hat{f}\in \G[\hat{h}]\setminus\G$. First, do (i) as long as it is possible. Since (i) shortens a word's length by $2$, this has to stop. After we stop, we have arrived to a short word, which satisfies the hypothesis of one of (ii) to (v). Do the corresponding operation. If we are in the hypothesis of (ii), after completing the operation once we will fall into the hypothesis of (iii) or (v). Applying operation (iii) finitely many times leads us to the hypothesis of (iv) and operation (iv) should take us to the hypothesis of (v). 
 
 Given a word $\hat{f}\in \G[\hat{h}]\setminus\G$, the above algorithm allows us to find a nice word $\hat{g}$ with the property that for any $h\in\homeo{\csf}$, $\fix{\hat{f}[h]}$ is finite if and only if $\fix{\hat{g}[h]}$ is finite. In this case, we will say that $\hat{g}$ is a nice word equivalent to $\hat{f}$. If we only apply operation (i) as long as it is possible, then we will say that $\hat{g}$ is a short word equivalent to $\hat{f}$. 
 
 We are ready to give an enumeration of all words that will ultimately represent all elements of $\gen{\G\cup\{H\}}\setminus\G$. Let $\lambda:\omega\to(\G[\hat{h}]\setminus\G)\times\omega$ be an enumeration of all pairs $\pair{\hat{f},n}$, where $\hat{f}$ is a non-empty, reduced, nice word. We will assume that the following property holds:
 \begin{quote}
 $(\ast)$ Let $\lambda(i)=\pair{\hat{f},m}$ and $\lambda(j)=\pair{\hat{g},n}$ with $m\leq n$. Assume that there are reduced words $\hat{f\sp\prime}$ and $\hat{\alpha}$, $\hat{\beta}$ (which may be empty) with $\hat{g}=\hat{\alpha}\hat{f\sp\prime}\hat{\beta}$ such that $\hat{f}$ is a nice word equivalent to $\hat{f\sp\prime}$. Then $i\leq j$.
 \end{quote}
 Notice that $\lambda(0)=\pair{\hat{h},0}$ necessarily.
 
 Given a word $\hat{f}\in\G[\hat{h}]$ we will decide the finite set of fixed points of the evaluation $\hat{f}[H]$ in some step of the recursion. The homeomorphisms $h_n$ will change in every step of the recursion so we have a chance to avoid fixed points by modifying them carefully. However, the functions $\varphi_n$ will fix the value of $H$ at some points from early stages. Here is where some fixed points will be unavoidable. Assume we are in step $n<\omega$ of the construction and $\lambda(n)=\pair{\hat{f},i}$. Then the unavoidable fixed points we are talking about are exactly the fixed points of $\hat{f}[\varphi_n]$, of which there are finitely many (since $\varphi_n$ is finite).
 \begin{itemize}
  \item[(e)] Let $n<\omega$ and $\hat{f}\in\G[\hat{h}]$ such that there is $m\leq n$ with $\lambda(m)=\pair{\hat{f},0}$. Then $\fix{\hat{f}[\varphi_m]}=\fix{\hat{f}[\varphi_n]}$.
  \item[(f)] Let $m\leq n<\omega$ and $\lambda(m)=\pair{\hat{f},i}$. Then $\fix{\hat{f}[h_n]}$ is a subset of the clopen set $\bigcup\{B(x,\frac{1}{i+1}):x\in\fix{\hat{f}[\varphi_m]}\}$.
 \end{itemize}
 Condition (e) says that all fixed points of $\hat{f}$ will be decided in the step where it appears for the first time. Condition (f) is added in order to control fixed points. According to \ref{near_no_fixed} and \ref{words_continuous}, condition (f) implies that $\fix{\hat{f}[H]}$ equals the finite set $\fix{\hat{f}[\varphi_n]}$, where $\lambda(n)=\pair{\hat{f},0}$.

 The last part of our induction hypothesis will be a condition that implies that $H\sp{n}$ has no fixed points for any $n<\omega$.
 \begin{itemize}
  \item[(g)] Let $n<\omega$ and $1\leq \ell<\omega$. Then $\fix{\hat{h}\sp\ell[\varphi_n]}=\emptyset$.
 \end{itemize}
 Strictly speaking, condition (g) is not necessary for our purposes. However, it will help us prove the inductive step.
 
 We have listed all conditions we need for the recursion so next we will describe a step. For $n=0$, take $D_0=E_0=\varphi_0=\emptyset$ and $h_0$ any homeomorphism with no fixed points but $h_0\sp{2}=\id$. 
 
 So now assume that $k<\omega$ and we have defined $h_j$, $D_j$, $E_j$ and $\varphi_j$ for $j\leq k$, satisfying conditions (a) to (f). In step $k+1$ we have to define $h_{k+1}$. What we will do is start with $h_k$ and modify its definition in order to obtain $h_{k+1}$. This modification will be $\epsilon$-close to $h_k$ for some proper $\epsilon>0$.
 
 \begin{claim}\label{closeisenough}
 There is $\epsilon>0$ such that if $\sigma(h_k,h_{k+1})<\epsilon$, then (a) for $n=k+1$ and (f) for $n=k+1$, $m\leq k$ hold.  
 \end{claim}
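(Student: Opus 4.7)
The plan is to find $\epsilon$ as the minimum of finitely many positive quantities, one for the triangle-inequality portion of (a) and one for each pair $(m, \lambda(m))$ with $m \leq k$ appearing in (f).

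First I would handle (a). The inductive hypothesis says $\sigma(h_j, h_k) < \frac{1}{2^j}$ for every $j < k$. Set $\delta_j = \frac{1}{2^j} - \sigma(h_j, h_k) > 0$ for each such $j$. Then, whenever $\sigma(h_k, h_{k+1}) < \delta_j$, the triangle inequality gives
$$\sigma(h_j, h_{k+1}) \leq \sigma(h_j, h_k) + \sigma(h_k, h_{k+1}) < \frac{1}{2^j}.$$
For the new case $j = k$ of (a) the requirement reads simply $\sigma(h_k, h_{k+1}) < \frac{1}{2^k}$. So the contribution to $\epsilon$ from condition (a) is $\epsilon_a = \min\bigl(\{\delta_j : j < k\} \cup \{\tfrac{1}{2^k}\}\bigr) > 0$.

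Next I would deal with (f). Fix $m \leq k$ and write $\lambda(m) = \pair{\hat{f}, i}$. The inductive hypothesis for $n = k$ says that
$$\fix{\hat{f}[h_k]} \subset U_m := \bigcup\Bigl\{B\bigl(x,\tfrac{1}{i+1}\bigr) : x \in \fix{\hat{f}[\varphi_m]}\Bigr\},$$
and by the choice of $\rho$, $U_m$ is clopen, hence $A_m := \csf \setminus U_m$ is closed. Since $A_m \cap \fix{\hat{f}[h_k]} = \emptyset$, fact \ref{near_no_fixed} applied to $\hat{f}[h_k] \in \homeo{\csf}$ yields $\eta_m > 0$ such that every $g \in \homeo{\csf}$ with $\sigma(\hat{f}[h_k], g) < \eta_m$ satisfies $A_m \cap \fix{g} = \emptyset$. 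By fact \ref{words_continuous}, the evaluation map $h \mapsto \hat{f}[h]$ is continuous at $h_k$, so there is $\mu_m > 0$ such that $\sigma(h_k, h_{k+1}) < \mu_m$ forces $\sigma(\hat{f}[h_k], \hat{f}[h_{k+1}]) < \eta_m$, and therefore $\fix{\hat{f}[h_{k+1}]} \subset U_m$, which is exactly (f) for this $m$.

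Finally, since only finitely many values $m \leq k$ are involved, I would take $\epsilon = \min\bigl(\{\epsilon_a\} \cup \{\mu_m : m \leq k\}\bigr) > 0$. No real obstacle arises: the argument is purely a continuity/compactness bookkeeping exercise, leveraging the two preparatory facts \ref{near_no_fixed} and \ref{words_continuous}. The only mild subtlety is remembering that $U_m$ must be clopen so that fact \ref{near_no_fixed} applies to its closed complement, and this is guaranteed by the specific choice of metric $\rho$ made after \ref{near_no_fixed}.
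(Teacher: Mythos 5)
Your proof is correct and follows essentially the same route as the paper: condition (a) via the triangle inequality, and condition (f) via the openness of the no-fixed-point condition (\ref{near_no_fixed}) combined with continuity of word evaluation (\ref{words_continuous}), finishing by taking the minimum over the finitely many constraints.
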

 
 \begin{proof}[\bf Proof of Claim \ref{closeisenough}]
  
 First, for (a), let $m<k+1<\omega$. We know that $\sigma(h_m,h_k)<\frac{1}{2\sp{m}}$, so $\epsilon$ must be smaller than $\frac{1}{2\sp{m}}-\sigma(h_m,h_k)$ so that $\sigma(h_m,h_{k+1})\leq\sigma(h_m,h_k)+\sigma(h_k,h_{k+1})<\sigma(h_m,h_k)+\epsilon<\frac{1}{2\sp{m}}$.
 
 Now, we turn to (f), where $n=k+1$, $m\leq k$. Let $\lambda(m)=\pair{\hat{g},j}$ and let $U$ be the complement of $\bigcup\{B(x,\frac{1}{i+1}):x\in\fix{\hat{f}[\varphi_m]}\}$. Since $U$ is clopen and $\hat{g}[h_k]$ has no fixed points in $U$, by \ref{near_no_fixed} above, there is $\delta<0$ such that if $h\in\homeo{\csf}$ with $\sigma(\hat{g}[h_k],h)<\delta$, then $\fix{h}\cap U=\emptyset$. By \ref{words_continuous}, there is $\delta\sp\prime>0$ such that if $\sigma(h_k,h)<\delta\sp\prime$, then $\sigma(\hat{g}[h_k],\hat{g}[h])<\delta$. Thus, we have to take $\epsilon<\delta\sp\prime$.
 
 Since the conditions above are only finitely many, we can indeed choose such an $\epsilon>0$ and Claim \ref{closeisenough} is proved.
 \end{proof}

 Here we remark that (f) for $m=n=k+1$ is harder and its proof is part of the work below.
 
 Next, we will define $D_{k+1}$, $E_{k+1}$ and $\varphi_{k+1}$. There are two cases, depending on the parity of $k$. We will assume that $k+1$ is even, the other case can be dealt in an equivalent way. So let $d$ be the element $D\setminus D_k$ with the least subscript and define $D_{k+1}=D_k\cup\{d\}$. It is enough to select $e\in E$ such that $E_{k+1}=E_k\cup\{e\}$ and $\varphi_{k+1}=\varphi_k\cup\{\pair{d,e}\}$, so that condition (b) holds. However, it is possible that some choices of $e$ might violate condition (e). Luckily, the set of elements of $E$ we have to avoid is finite.
 
 \begin{claim}\label{densechoices}
 There is a countable dense set $F\subset E\setminus (D_k\cup E_k)$ such that if we choose $e\in F$ and define $\varphi_{k+1}=\varphi_k\cup\{\pair{d,e}\}$, then condition (b) holds for $k+1=2n$ and condition (e) holds for $n=k+1$.
 \end{claim}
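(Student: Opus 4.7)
The plan is to reduce the claim to a finite combinatorial problem and then show that the set of bad values of $e$ is finite. Condition (b) for $k+1=2n$ is insensitive to $e$: by the inductive hypothesis we have $\{d_i:i<n-1\}\subset D_k$, so the inductive choice of $d$ as the least-indexed element of $D\setminus D_k$ already forces $\{d_i:i<n\}\subset D_{k+1}$. Hence the entire content of the claim lies in condition (e).

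For condition (e) at $n=k+1$, the case $m=k+1$ is tautological, and for $m\leq k$ the inductive version of (e) at step $k$ gives $\fix{\hat{f}[\varphi_m]}=\fix{\hat{f}[\varphi_k]}$. Since $\varphi_k\subset\varphi_{k+1}$ automatically yields $\fix{\hat{f}[\varphi_k]}\subset\fix{\hat{f}[\varphi_{k+1}]}$, the task reduces to arranging that extending $\varphi_k$ by $\pair{d,e}$ creates no new fixed points for any $\hat{f}$ in the finite set $\mathcal{F}=\{\hat{f}:\exists\,m\leq k,\ \lambda(m)=\pair{\hat{f},0}\}$.

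Next, for each $\hat{f}\in\mathcal{F}$, I would analyse the trajectories that could yield a new fixed point. A new fixed point $x$ of $\hat{f}[\varphi_{k+1}]$ is one whose trajectory $x=x_0,x_1,\dots,x_\ell=x$ through the letters of $\hat{f}$ uses the new pair $\pair{d,e}$ at some $\hat{h}^{\pm1}$-position, either as $x_{j-1}=d$ forcing $x_j=e$ at a $\hat{h}$-position, or as $x_{j-1}=e$ forcing $x_j=d$ at a $\hat{h}^{-1}$-position. I would then enumerate the finitely many data consisting of a nonempty subset $S$ of the $\hat{h}^{\pm1}$-positions of $\hat{f}$ (the positions where the new pair is used) together with an assignment to each position outside $S$ of an element of $D_k$ or $E_k$ specifying how $\varphi_k$ routes there. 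Because the $\G$-letters of $\hat{f}$ act as fixed homeomorphisms, each such combinatorial datum yields a finite system of equations in $x$ and $e$ that is either inconsistent or determines $(x,e)$ uniquely. Hence only finitely many bad values of $e$ arise for this $\hat{f}$, and summing over $\hat{f}\in\mathcal{F}$ gives a finite set $B$ of bad values of $e$.

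Finally, I would set $F=E\setminus(D_k\cup E_k\cup B)$. Since $E$ is countable dense in $\csf$ and the removed set is finite, $F$ is a countable dense subset of $\csf$ contained in $E\setminus(D_k\cup E_k)$, and by construction every $e\in F$ satisfies both (b) and (e). The main obstacle in executing this plan is the combinatorial enumeration of routings in the third paragraph; in particular, checking that every potential new fixed point really does correspond to one of the enumerated routings, so that the finite set $B$ genuinely catches all the bad values of $e$.
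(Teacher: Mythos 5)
Your reduction is right: (b) is automatic from the choice of $d$ as the least-indexed element of $D\setminus D_k$, and (e) at $n=k+1$ comes down to showing that extending $\varphi_k$ by $\pair{d,e}$ produces no new fixed points of $\hat f[\varphi_{k+1}]$ for the finitely many words $\hat f$ enumerated so far. Your plan to enumerate all possible ``routing data'' of a putative new fixed point and remove one bad $e$ per datum would indeed yield a finite bad set, but you explicitly defer the verification of the key step as an ``obstacle,'' and that verification is precisely where the work lies, so as written the argument is a plan rather than a proof.

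The paper takes a genuinely different and much more economical route. It defines $F$ concretely: with $\G_{k+1}$ the finite set of $\G$-letters (and their inverses) occurring in the words $\hat f$ with $\lambda(m)=\pair{\hat f,i}$ for $m\le k+1$, it sets $F=E\setminus\{g^{-1}(x):x\in D_k\cup\{d\}\cup E_k,\ g\in\G_{k+1}\cup\{\id\}\}$. The point, replacing your global enumeration of trajectory shapes, is purely \emph{local}: take the first position $t$ at which the $\varphi_k$-evaluation of a would-be new fixed point $x$ breaks down, so that $x_{t-1}=d,\ x_t=e$ (letter $\hat h$) or $x_{t-1}=e,\ x_t=d$ (letter $\hat h^{-1}$). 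In the first case, the next letter is either a $\G$-letter $\hat\beta\in\G_{k+1}$ (and then $\hat\beta(e)\notin D_k\cup\{d\}\cup E_k$ by the choice of $F$, so the subsequent $\hat h^{\pm1}$ step is undefined) or it is $\hat h$ itself (and then $\psi$ is undefined at $e$); either way the trajectory cannot continue. In the second case, stepping back two letters forces $e=\hat\beta(x_{t-2})$ with $\hat\beta\in\G_{k+1}$ and $x_{t-2}\in D_k\cup E_k$, directly contradicting $e\in F$. Thus the paper never tracks the full trajectory: one or two letters past the first use of the new pair already kills it, and so the bad set to be removed from $E$ has a clean closed form without any case enumeration over subsets $S$ or assignments of routings. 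Both routes are correct in principle, but closing the gap in your third paragraph would essentially amount to rediscovering this local argument, which is why the paper's explicit $F$ is the more natural thing to write down.
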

 
 \begin{proof}[\bf Proof of Claim \ref{densechoices}]
  Let $\G_{k+1}$ be the set of all elements $g\in\G$ such that there is $m\leq k+1$, $\hat{f}$, $\hat{\alpha},\hat{\beta}$ words such that $\lambda(m)=\pair{\hat{f},i}$ for some $i<\omega$ and either $\hat{f}=\hat{\alpha}g\hat{\beta}$ or $\hat{f}=\hat{\alpha}g\sp{-1}\hat{\beta}$ is a reduced word. Notice that $\G_{k+1}$ is finite. Then, define 
 $$
 F=E\setminus\{g\sp{-1}(x):x\in D_k\cup\{d\}\cup E_k,g\in\G_{k+1}\cup\{\id\}\}.
 $$
 Clearly, $E\setminus F$ is finite so $F$ is countable dense. It also follows that $F\subset E\setminus (D_k\cup E_k)$. In order to prove Claim \ref{densechoices}, let $e\in F$ and define $\psi=\varphi_k\cup\{\pair{d,e}\}$. 
 
   We only have to prove that given $\hat{f}$, where $\lambda(m)=\pair{\hat{f},0}$ for some $m\leq k$, $\fix{\hat{f}[\varphi_m]}=\fix{\hat{f}[\psi]}$. Let $\ell$ be the length of $\hat{f}$. By induction, $\fix{\hat{f}[\varphi_m]}=\fix{\hat{f}[\varphi_k]}$ so we only need to prove that $\fix{\hat{f}[\varphi_k]}=\fix{\hat{f}[\psi]}$. Notice that since $\varphi_k\subset\psi$, then $\fix{\hat{f}[\varphi_k]}\subset\fix{\hat{f}[\psi]}$ . Recall that since $\hat{f}$ is nice, $\hat{f}_1=\hat{h}$ so $\fix{\hat{f}[\psi]}\subset D_k\cup\{d\}$.
   
   Let $x\in\fix{\hat{f}[\psi]}\setminus\fix{\hat{f}[\varphi_k]}$. Then necessarily $\hat{f}[\varphi_k]$ is not defined at $x$. Let $t$ be the first $1\leq j\leq\ell$ such that $\hat{f}_j[\varphi_k]$ is not defined at $x$. Let $\hat{f}_t=\hat{\alpha}\hat{f}_{t-1}$ be such that $\hat{\alpha}$ is of length $1$ and $\hat{f}_{t-1}$ is of length $t-1$. Also, for every $1\leq j\leq\ell$, let $x_j=\hat{f}_{j}[\psi](x)$. Notice that for $1\leq j<t$, $x_j=\hat{f}_{j}[\varphi_k](x)$.
 
 Notice that $\hat{\alpha}$ cannot be a member of $\G$. To see this, notice that since $\hat{f}_{t-1}[\varphi_k]$ is defined at $x$, $\hat{\alpha}$ is not defined at $x_{t-1}$. The only way this can happen is when $\hat{\alpha}\in\{\hat{h},\hat{h}\sp{-1}\}$. If $\hat{\alpha}=\hat{h}$, then $\varphi_k$ is not defined at $x_{t-1}$ but $\psi$ is; the only way this is possible is if $x_{t-1}=d$ and $x_t=e$. By a similar reasoning, if $\hat{\alpha}=\hat{h}\sp{-1}$, then $x_{t-1}=e$ and $x_t=d$.
 
 \vskip5pt
 \noindent{\it Case 1}: $x_{t-1}=d$, $x_t=e$ and $\hat{\alpha}=\hat{h}$
 \vskip5pt
 
 If $t=\ell$, $\hat{f}_t=\hat{f}$ so $\hat{f}[\psi](x)=e$. Since $e\notin D_k\cup\{d\}$, $x\neq e$ and we obtain a contradiction. Thus, $t<\ell$ must hold. 
 
 Let $\hat{\beta}$ be a word of length $1$ such that $\hat{f}_{t+1}=\hat{\beta}\hat{f}_{t}$. If $\hat{\beta}\in\G$, then by the definition of $F$, $\hat{f}_{t+1}[\psi](x)=\hat{\beta}[\psi](x_t)=\hat{\beta}(x_t)=\notin D_k\cup\{d\}\cup E_k$. Then there are two possibilities next: 
 \begin{itemize}
  \item If $\ell=t+1$, then $x$ is not a fixed point of $\hat{f}[\psi]=\hat{f}_{t+1}[\psi]$, which is a contradiction. 
  \item If $t+1<\ell$, let $\hat{\gamma}$ be a word of length $1$ with $\hat{f}_{t+2}=\hat{\gamma}\hat{f}_{t+1}$. Necessarily, $\hat{\gamma}\in\{\hat{h},\hat{h}\sp{-1}\}$ so $\hat{\gamma}[\psi]$ is not defined in $x_{t+1}$. So $\hat{f}[\psi]$ is not defined in $x$, which is a contradiction.
 \end{itemize}
 The only other possibility is that $\hat{\beta}=\hat{h}$ (otherwise, $\alpha$ and $\beta$ cancel). Since $e\notin D_k\cup\{d\}$ by the definition of $F$, then $\hat{\beta}[\psi]$ is not defined in $e$. So $\hat{f}[\psi]$ is undefined at $x$, which is a contradiction.
 
  \vskip5pt
 \noindent{\it Case 2}: $x_{t-1}=e$, $x_t=d$ and $\hat{\alpha}=\hat{h}\sp{-1}$
 \vskip5pt
 
 First, notice that $t\neq 1$. Otherwise, $\hat{f}_{t-1}[\psi]$ is the identity function and $e=x_{t-1}=\hat{f}_{t-1}[\psi](d)=d$, which contradicts the definition of $F$. So there exists a word $\hat{\beta}$ of length $1$ with $\hat{f}_{t-1}=\hat{\beta}\hat{f}_{t-2}$. 
 
 Clearly, $\hat{\beta}$ cannot be equal to $\hat{h}$ because otherwise, $\hat{\alpha}$ and $\hat{\beta}$ cancel. Moreover, $\hat{\beta}$ cannot be equal to $\hat{h}\sp{-1}$ either. Indeed, $e=\hat{f}_{t-1}[\varphi_k](x)=[\varphi_k\sp{-1}\circ(\hat{f}_{t-2}[\varphi_k])](x)=\varphi_k\sp{-1}(\hat{f}_{t-2}[\psi](x))$ implies that $\varphi_{k}$ is defined at $e$, which contradicts the definition of $F$.
 
 So $\hat{\beta}$ is in $\G$. Now, let us argue that $x_{t-2}\in D_k\cup E_k$, which will be a contradiction by the definition of $F$ and the fact that $\hat{\beta}[\varphi_k](x_{t-2})=e$. If $t-2=0$ and $\hat{f}_{t-2}$ is the empty word, then clearly $x_{t-2}=x\in D_k$. Otherwise, let $\hat{\gamma}$ be a word of length $1$ with $\hat{f}_{t-2}=\hat{\gamma}\hat{f}_{t-3}$. Then $\hat{\gamma}\in\{\hat{h},\hat{h}\sp{-1}\}$ and $\hat{\gamma}[\varphi_m]\in\{\varphi_m,\varphi_m\sp{-1}\}$ so clearly $x_{t-2}\in D_k\cup E_k$. Thus, in this case we also get a contradiction.
 
 Thus, since we obtain a contradiction in all cases considered, we conclude that $\fix{\hat{f}[\varphi_k]}=\fix{\hat{f}[\psi]}$ and Claim \ref{densechoices} is proved.
 \end{proof}

 We promised to define $D_{k+1}$, $E_{k+1}$ and $\varphi_{k+1}$ before the statement of Claim \ref{densechoices}, and we are ready since we only have to choose $e\in F$. Since $F$ is dense, we just choose any $e\in F$ such that $\rho(e,h_k(d))<\epsilon/2$ for $\epsilon>0$ given in Claim \ref{closeisenough}. Define $E_{k+1}=E_k\cup\{e\}$ and $\varphi_{k+1}=\varphi_k\cup\{\pair{d,e}\}$ so that by Claim \ref{densechoices}, conditions (b), (c) and (e) hold.
 
 At this point we have to look at condition (g). We will simply show how the definition of $F$ implies (g) for $n=k+1$ and all $1\leq\ell<\omega$. So assume that $1\leq\ell<\omega$ and there is $x\in\fix{\hat{h}\sp{\ell}[\varphi_{k+1}]}$, we will reach a contradiction. Let $x_0=x$ and $x_j=(\varphi_{k+1})\sp{j}(x)$ for $1\leq j\leq\ell$. Notice that $\{x_j:j\leq\ell\}\subset D_{k+1}$. First, assume that $d\in\{x_j:j\leq\ell\}$. Let $t$ be the first $1\leq j\leq\ell$ such that $x_j=d$. Since $x_0=x_\ell$, $t<\ell$. Thus, $x_{t+1}=e$. However, by the definition of $F$, $e$ is not in $D_{k+1}$, the domain of $\varphi_{k+1}$. This is impossible since any $x_j$ with $j\leq\ell$ is in the domain of $\varphi_{k+1}$. It follows that $\{x_j:j\leq\ell\}\subset D_k$. But this implies that $x\in\fix{\hat{h}\sp{\ell}[\varphi_{k}]}$, which contradicts our inductive hypothesis (g) for $n=k$. Thus, (g) follows for $n=k+1$.
 
 The next step is to construct $h_{k+1}$ itself. We will start with $h_k$ and in a finite sequence of steps, construct homeomorphisms $h_{k+1}=\eta_0,\eta_1,\ldots,\eta_a$ such that $\sigma(\eta_j,\eta_{j+1})<\delta_j$ for some appropriate $\delta_j$. The last homeomorphism constructed will be $\eta_a=h_{k+1}$. The only condition that we need on that sequence of $\delta_j$ is that their sum is $<\epsilon$ so that the hypothesis of Claim \ref{closeisenough} holds. We will not worry about the exact value of $a$ because we can take $\eta_j=1/2\sp{j+1}$ for each $j$.
 
 The first modification we need is a homeomorphism $\eta\in\homeo{\csf}$ with $\sigma(h_k,\eta)<\epsilon/2$, and $\varphi_{k+1}\subset\eta$. Since we chose $e\in F$ such that $\rho(e,h_k(d))<\epsilon/2$, it is possible to modify $h_k$ in a small neighborhood of $d$ that does not intersect $D_k$.

 Next, we need is to modify $\eta$ to a homeomorphism $\eta\sp\prime$ in order that (f) holds for $m=n=k+1$. This is the hardest part of the proof. Further, we need that $\varphi_{k+1}\subset\eta\sp\prime$ so we cannot modify $\eta$ at points of $D_k$. In fact, we prove the following claim, where we bound the set of fixed points of $\hat{f}[\eta\sp\prime]$ for every $f$ that we have considered so far.

  \begin{claim}\label{hardclaim}
 Let $\eta\in\homeo{\csf}$ and $\delta>0$ be such that $\varphi_{k+1}\subset\eta$, $\sigma(\eta,h_k)<\epsilon$. Also, let $\hat{f}$ be such that $\lambda(m)=\pair{\hat{f},i}$ for some $m\leq k+1$ and $i<\omega$. Then, if $U$ is a clopen set of $\csf$ such that $U\cap\fix{\hat{f}[\varphi_{k+1}]}=\emptyset$, there is $\eta\sp\prime\in\homeo{\csf}$ such that $\varphi_{k+1}\subset\eta\sp\prime$, $\sigma(\eta,\eta\sp\prime)<\delta$ and $U\cap\fix{\hat{f}[\eta\sp\prime]}=\emptyset$.
 \end{claim}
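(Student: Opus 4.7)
My plan is to perform a local surgery on $\eta$, piece by piece over a clopen partition of $U$, exploiting the abundance of clopen self-homeomorphisms of the Cantor set. The guiding intuition is that points $x \in U$ split into two kinds: those for which the entire evaluation chain $x,\ \hat{f}_1[\eta](x),\ \hat{f}_2[\eta](x),\ \dots,\ \hat{f}[\eta](x)$ stays at every step inside the finite decided set $D_{k+1} \cup E_{k+1}$, and those for which at least one step escapes into the undecided region. On points of the first kind, $\hat{f}[\eta](x) = \hat{f}[\varphi_{k+1}](x) \neq x$ is already forced by the hypothesis; on points of the second kind we have genuine room to deform $\eta$ and thereby push $\hat{f}[\eta](x)$ off the diagonal.

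Concretely, I would first refine $U$ to a finite clopen partition $U = U_1 \cup \dots \cup U_p$ on which the chain pattern is uniform: for each $1 \le j \le \ell$ (where $\ell$ is the length of $\hat{f}$), the map $\hat{f}_j[\eta]\!\restriction_{U_i}$ is either constant with value in $D_{k+1}\cup E_{k+1}$ (a \emph{decided step}) or has image inside a clopen set disjoint from $D_{k+1}\cup E_{k+1}$ (a \emph{free step}). Such a partition exists because $\hat{f}_j[\eta]$ is continuous and $D_{k+1}\cup E_{k+1}$ is finite. On pieces whose chain is all-decided, nothing is needed. On a remaining piece $U_i$, let $j(i)$ be the first free step; since $\hat{f}$ is nice, its rightmost letter is $\hat{h}$, so this step is an application of $\eta^{\pm 1}$ to a clopen set $W_i$ that is disjoint from $D_{k+1}$ (or from $E_{k+1}$, in the $\hat{h}^{-1}$ case). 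I would then perturb $\eta$ by composing with a small clopen swap supported inside $W_i$, chosen small enough in diameter that $\sigma(\eta,\eta') < \delta$ and that the swap misses $D_{k+1}$, so $\varphi_{k+1}\subset\eta'$ is preserved. The swap can be arranged so that the resulting new values of $\hat{f}[\eta'](x)$ for $x \in U_i$ lie in a prescribed clopen set disjoint from $U_i$, hence no fixed points remain in $U_i$.

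The main obstacle is simultaneity across the pieces $U_i$: a surgery performed near $W_i$ to treat $U_i$ can shift the chain of a point in a different piece $U_{i'}$ that happens to pass through $W_i$, potentially creating new fixed points there. I would address this by iterating the construction piece by piece with a geometrically decreasing error budget, applying Claim \ref{near_no_fixed} and Claim \ref{far_preserved} to show that ``$U_{i'}\cap\fix{\hat{f}[\eta']}=\emptyset$'' is an open condition on $\eta'$ in the uniform topology, hence stable under later sufficiently small perturbations. To make this iteration converge, the shrinking clopen modifications must also be chosen so that the chain-pattern partition on the untreated pieces does not degenerate; the niceness of $\hat{f}$, which pins the first evaluation step to $\hat{h}$ and prevents conjugation to a shorter word, is what keeps this decomposition stable. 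This patch-and-preserve bookkeeping — balancing the perturbations against the finitely many open conditions carried over from earlier stages — is the technical heart of the proof and where the full strength of the Cantor-set setup is used.
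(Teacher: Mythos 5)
The proposal captures the outer strategy of the paper's Step~3 — local surgery on a clopen partition followed by a crossover to push $\hat{f}[\eta']$ off the diagonal — and it correctly flags the interference problem between pieces, for which the ``open condition + shrinking error budget'' argument is the right tool. However, there is a genuine gap, in two parts, and it is precisely where the real difficulty lives.

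First, your argument has no induction. The paper proves Claim~\ref{hardclaim} by induction on the index $m$ in the enumeration $\lambda$, exactly so that the inductive hypothesis is available for the shorter (nice or short-equivalent) sub-words $\hat{f}_j$ appearing inside $\hat{f}$. That control over sub-words is what makes the crossover in Step~3 legal: one first arranges, on a clopen set $W$, that $W\cap\hat{f}_j[\eta][W]=\emptyset$ for every $j$ where a fresh $\hat{h}$ or $\hat{h}^{-1}$ occurs, and only then performs the partition-and-swap that makes $\hat{f}[\eta'][W_i^0]\subset W_i^1$ and $\hat{f}[\eta'][W_i^1]\subset W_i^0$. Without that preliminary disjointness, the swap does not propagate coherently along the chain, and your final sentence (``the swap can be arranged so that the resulting new values of $\hat{f}[\eta'](x)$ lie in a prescribed clopen set disjoint from $U_i$'') is exactly what fails to be obvious.

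Second, you do not address the self-interference coming from $\hat{h}^{-1}$ occurrences inside $\hat{f}$. Modifying $\eta$ inside a small clopen set $W_i$ at the first free step also changes $\eta^{-1}$ near $\eta[W_i]$, and for a nice word such as $g_1\hat{h}^{-1}g_0\hat{h}$ the chain of a point in $U_i$ re-enters the modified region at a later $\eta^{-1}$-step. This is the ``doubling back'' problem; it cannot be absorbed by the across-piece stability argument you gave, because it concerns the chain of the very point you are treating. The paper's Step~2 isolates it by splitting the indices into $A$ (new $\hat{h}$) and $B$ (new $\hat{h}^{-1}$) and, for each $j\in B$, factoring $\hat{f}_j=\hat{h}^{-1}\widehat{\mu^j}^{-1}\widehat{\gamma^j}\widehat{\mu^j}\hat{h}$ with $\widehat{\gamma^j}$ short, so that the inductive hypothesis (via a nice word equivalent to $\widehat{\gamma^j}$) can again be invoked. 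Finally, the points of $D_{k+1}$ on which $\hat{f}[\varphi_{k+1}]$ is undefined require their own treatment (the paper's Step~1), since at those points you cannot move $\eta(x)$ at all and must instead modify $\eta$ at the first free step further along the chain; your ``all-decided chain'' dichotomy is not the right one here, because $\hat{f}[\eta](x)=\hat{f}[\varphi_{k+1}](x)$ requires $\varphi_{k+1}$ to be defined at every step of the chain, not merely that the chain visits $D_{k+1}\cup E_{k+1}$.
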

 
   \begin{proof}[\bf Proof of Claim \ref{hardclaim}]
    
    We work by induction on $m$. The case $m=0$ immediately holds from the definition of $h_0$ and condition (g). So assume that $m>0$ and thus, $\hat{f}$ has length $\ell\geq 2$.
  
  \vskip5pt
  \noindent{\sffamily Step 1:} Let $\delta\sp\prime>0$. If $x\in D_{k+1}\setminus \fix{\hat{f}[\varphi_{k+1}]}$, then there is $\eta\sp\prime\in\homeo{\csf}$ such that $\varphi_{k+1}\subset\eta\sp\prime$, $\sigma(\eta,\eta\sp\prime)<\delta\sp\prime$ and $x\notin\fix{\hat{f}[\eta\sp\prime]}$.
  \vskip5pt
  
  We only need to prove Step 1 for a fixed such $x\in D_{k+1}\setminus \fix{\hat{f}[\varphi_{k+1}]}$, since there are finitely many points in $D_{k+1}\setminus \fix{\hat{f}[\varphi_{k+1}]}$ and we can appeal to \ref{far_preserved}. That $x\notin\fix{\hat{f}[\varphi_{k+1}]}$ means one of two things. If $\hat{f}[\varphi_{k+1}]$ is defined at $x$ and $\hat{f}[\varphi_{k+1}](x)\neq x$, we just let $\eta\sp\prime=\eta$. Otherwise, $\hat{f}[\varphi_{k+1}]$ is undefined at $x$, we will assume that this is the case.
  
  So define $x_0=x$ and $x_j=\hat{f}_j[\eta](x)$ for $j\leq\ell$. Since $\hat{f}_1=\hat{h}$, $\hat{f}_1[\varphi_{k+1}]=\varphi_{k+1}$ is defined at all points of $D_{k+1}$. Thus, there exists $t$ which is the minimal $j<\ell$ such that $\hat{f}_{j+1}[\varphi_{k+1}]$ is undefined at $x$.  Notice that $x_j=\hat{f}_j[\varphi_{k+1}](x)$ for $j\leq t$. Let $\hat{\alpha}$ be the word such that $\hat{f}_{t+1}=\hat{\alpha}\hat{f}_t$. Then $\hat{\alpha}\notin\G$ since otherwise $\hat{f}_{t+1}$ would be defined at $x$. There are two cases: $\hat{\alpha}$ is either equal to $\hat{h}$ or equal to $\hat{h}\sp{-1}$.
  
  In order to avoid that $x$ is a fixed point of $\hat{f}$, we will proceed as follows: we will either change the definition of $\eta$ in a small neighborhood of $x_t$, if $\hat{\alpha}=\hat{h}$; or change the definition of $\eta\sp{-1}$ in a small neighborhood of $x_t$, if $\hat{\alpha}=\hat{h}\sp{-1}$. This is possible since $x_t\notin D_{k+1}$ if $\hat{\alpha}=\hat{h}$ and $x_t\notin E_{k+1}$ if $\hat{\alpha}=\hat{h}\sp{-1}$. However, first we need to make sure that by changing this definition, we do not loose control of the definition of $\eta$ or $\eta\sp{-1}$ on neighborhoods of points of the form $x_j$ with $j>t$. 
  
  We will assume that $\hat{\alpha}=\hat{h}$. The other case can be treated in an analogous way. Let $\hat{\beta}\in\G[\hat{h}]$ be such that $\hat{f}=\hat{\beta}\hat{f}_t$. Notice that $\hat{\beta}$ has length $\ell-t>0$. Let
  $$
  A=\{j<\ell-t:\hat{\beta}_{j+1}=\hat{h}\hat{\beta}_j\}.
  $$
  Given $j\in A$, it easily follows that $\hat{\beta}_j$ is nice and $x_t\notin\fix{\hat{\beta}_j[\varphi_{k+1}]}$ since $x_t$ is not defined at $\varphi_{k+1}$. By condition $(\ast)$ in the definition of $\lambda$ and our inductive hypothesis, it is easy to argue that there is $\eta_0\in\homeo{\csf}$ such that $\varphi_{k+1}\subset\eta_0$, $\sigma(\eta,\eta_0)<\delta\sp\prime/3$ and $x_t\notin\fix{\hat{\beta}_j[\eta_0]}$ for all $j\in A$. 
  
  Next, consider the set
  $$
  B=\{j<\ell-t:\hat{\beta}_j=\hat{h}\sp{-1}\hat{\beta}_{j-1}\}.
  $$
  We would like to obtain a homeomorphism $\eta\sp{\prime\prime}\in\homeo{\csf}$ such that $\varphi_{k+1}\subset\eta\sp{\prime\prime}$, $\sigma(\eta_0,\eta\sp{\prime\prime})<\delta\sp\prime/3$ and $x_t\notin\fix{\hat{\beta}_j[\eta\sp{\prime\prime}]}$ for all $j\in B$. However, this will not be as easy as in the case for $j\in A$, where it followed by the induction hypothesis in a straightforward manner.
  
  We will use induction on the elements of $B$. So let $j\in B$ and inductively assume that for some $0<\delta\sp{\prime\prime}<\delta\sp\prime/3$ there is $\eta_1\in\homeo{\csf}$ such that $\varphi_{k+1}\subset\eta_1$, $\sigma(\eta_0,\eta_1)<\delta\sp{\prime\prime}$ and $x_t\notin\fix{\hat{\beta}_i[\eta_1]}$ for all $i\in A$ and all $i\in B$ with $i<j$ (if any). Notice that even though $\hat{\beta}_j$ is not short, we can write $\hat{\beta}_j=\hat{h}\sp{-1}\hat{\mu}\sp{-1}\hat{\gamma}\hat{\mu}\hat{h}$ where $\hat{\gamma}$ is a short word (and $\hat\mu$ may be of length $0$).
  
  Choose a clopen set $W$ with the following properties:
  \begin{itemize}
   \item[(1)] $x_t\in W$,
   \item[(2)] $W$ and $\eta_1[W]$ both have diameter $<\delta\sp{\prime\prime}$,
   \item[(3)] $W$ does not intersect $D_{k+1}$, and
   \item[(4)] if $i\in A$ or $i\in B$ and $i$ is at most the length of $\hat{\mu}\hat{h}$, then $(\hat{\mu}\hat{h})_i[\eta_1](x_t)\notin W$.
  \end{itemize}
  
   The set $\fix{\hat\gamma[\varphi_{k+1}]}$ is finite so there is $a\in \eta_1[W]$ such that $b=\hat{\mu}[\eta_1](a)\notin\fix{\hat\gamma[\varphi_{k+1}]}$. Then it is elementary to construct $\eta_2\in\homeo{\csf}$ such that $\eta_2\!\!\restriction_{\csf\setminus W}=\eta_1\!\!\restriction_{\csf\setminus W}$ and $\eta_2(x_t)=a$. Clearly, $\sigma(\eta_1,\eta_2)<\delta\sp{\prime\prime}$ and $\varphi_{k+1}\subset\eta_2$ by properties (2) and (3). Moreover, by property (4) it follows that $b=\hat{\mu}[\eta_2](a)$, which implies that $\hat{\mu}\hat{h}[\eta_2](x_t)\notin\fix{\hat\gamma[\varphi_{k+1}]}$.
  
  The word $\hat\gamma$ might not be nice, but it is equivalent to a nice word $\hat{\gamma}\sp\prime$ by means of the simplification algorithm given above, after applying some instances of operations (ii), (iii) and (iv). Thus, the sets $\fix{\hat\gamma[\varphi_{k+1}]}$ and $\fix{\hat\gamma\sp\prime[\varphi_{k+1}]}$ are related by this algorithm. By property $(\ast)$ of the definition of $\lambda$, we may apply our inductive hypothesis for $\hat{\gamma}\sp\prime$ in any clopen set missing $\fix{\hat\gamma\sp\prime[\varphi_{k+1}]}$.
  
  From these considerations, it is not hard to argue that there exists $\eta_3\in\homeo{\csf}$ with $\varphi_{k+1}\subset\eta_3$, $\sigma(\eta_2,\eta_3)<\delta\sp{\prime\prime}$ and $\hat{\mu}\hat{h}[\eta_3](x_t)\notin\fix{\hat\gamma[\eta_3]}$. This easily implies that $x_t\notin\fix{\hat{\beta}_j[\eta_3]}$. Notice that by choosing $\delta\sp{\prime\prime}$ small enough according to \ref{far_preserved} and \ref{words_continuous}, we obtain that $x_t\notin\fix{\hat{\beta}_i[\eta_3]}$ for all $i\in A$ and all $i\in B$ with $i\leq j$.
  
  Thus, after this procedure, it is possible to obtain $\eta\sp{\prime\prime}$ as desired. Namely, $\varphi_{k+1}\subset\eta\sp{\prime\prime}$, $\sigma(\eta,\eta\sp{\prime\prime})<2\delta\sp\prime/3$ and $x_t\notin\fix{\hat{\beta}_j[\eta\sp{\prime\prime}]}$ for all $j\in A\cup B$. We are ready to construct $\eta\sp\prime$. Similarly as before, choose a clopen set $W$ with the following properties:
  \begin{itemize}
   \item[(1)] $x_t\in W$,
   \item[(2)] $W$ and $\eta\sp{\prime\prime}[W]$ both have diameter $<\delta\sp\prime/3$,
   \item[(3)] $W$ does not intersect $D_{k+1}$, and
   \item[(4)] if $i\in A\cup B$, then $\hat{\beta}_j[\eta\sp{\prime\prime}](x_t)\notin W$.
  \end{itemize}
  Let $\hat{\nu}$ be such that $\hat{\beta}=\hat{\nu}\hat{h}$. Choose $a\in\eta\sp{\prime\prime}[W]$ be any point such that $\hat{\nu}[\eta\sp{\prime\prime}](a)\ne x$. Then consider $\eta\sp\prime\in\homeo{\csf}$ such that $\eta\sp\prime\!\!\restriction_{\csf\setminus W}=\eta\sp{\prime\prime}\!\!\restriction_{\csf\setminus W}$ and $\eta\sp\prime(x_t)=a$. Clearly, (2) and (3) imply that $\sigma(\eta\sp\prime,\eta\sp{\prime\prime})<\delta\sp\prime/3$ and $\varphi_{k+1}\subset\eta\sp\prime$. Then, $\sigma(\eta\sp\prime,\eta)<\delta\sp\prime$. By (4), $\hat{\nu}[\eta\sp{\prime}](a)=\hat{\nu}[\eta\sp{\prime\prime}](a)$ so $\hat{\beta}[\eta\sp\prime](x_t)\ne x$. Moreover, $\hat{f}_t[\eta\sp\prime]=\hat{f}_t[\varphi_{k+1}]$ so $\hat{f}_t[\eta\sp\prime](x)=x_t$. So we obtain that $\hat{f}[\eta\sp\prime](x)\ne x$.
  
  This finishes the proof of Step 1. As mentioned before, by \ref{far_preserved} and \ref{words_continuous} we may assume that $x\notin\fix{\hat{f}[\eta\sp\prime]}$ in fact holds for all $x\in D_{k+1}\setminus \fix{\hat{f}[\varphi_{k+1}]}$. In order to simplify notation, from now on we will assume that the original homeomorphism $\eta$ satisfies the statement of Step 1. 
  
  Next, for each $x\in D_{k+1}\setminus \fix{\hat{f}[\varphi_{k+1}]}$, let $U_x$ be a clopen set such that $x\in U_x$ and $\hat{f}[\eta][U_x]\cap U_x=\emptyset$. Define $V=U\setminus\bigcup\{U_x:x\in D_{k+1}\setminus \fix{\hat{f}[\varphi_{k+1}]}\}$ and consider the following set:
  $$
  A=\{j<\ell:\hat{f}_{j+1}=\hat{h}\hat{f}_j\}.
  $$
  Notice that for all $j\in A$, $\hat{f}_j$ is nice and $V\cap\fix{\hat{f}_j[\varphi_{k+1}]}=\emptyset$. Thus, by our inductive hypothesis we may assume that $V\cap\fix{\hat{f}_j[\eta\sp\prime]}=\emptyset$ for all $j\in A$. Consider now the set
  $$
  B=\{j<\ell:\hat{f}_j=\hat{h}\sp{-1}\hat{f}_{j-1}\}.
  $$
  Just like in the proof of Step 1, we cannot get rid of fixed points of $\hat{f}_j[\eta\sp\prime]$ where $j\in B$ so easily and we need to do some extra work.
  
  Let $j\in B$. Write 
  $$
  \hat{f}_j=(\hat{h})\sp{-1}(\widehat{\mu\sp{j}})\sp{-1}(\widehat{\gamma\sp{j}})(\widehat{\mu\sp{j}})(\hat{h})
  $$
  where $\widehat{\gamma\sp{j}}$ is short. Also, let $V_j=\hat{f}_j[\eta][V]$ and let $\ell_j$ be the length of $\widehat{\mu\sp{j}}\hat{h}$.
  
  \vskip5pt
  \noindent{\sffamily Step 2:} Let $\delta\sp\prime>0$. There is $\eta\sp\prime\in\homeo{\csf}$ with $\varphi_{k+1}\subset\eta\sp\prime$, $\sigma(\eta,\eta\sp\prime)<\delta\sp\prime$; and a clopen set $Z\subset V$ such that, given $j\in B$ and $y\in\fix{\widehat{\gamma\sp{j}}[\varphi_{k+1}]}\cap V_{\ell_j}$, then $y\in(\widehat{\mu\sp{j}}\hat{h})[\eta\sp\prime][Z]$ and $Z\cap\fix{\hat{f}[\eta\sp\prime]}=\emptyset$.
  \vskip5pt
  
  Denote 
  $$
  S=\bigcup\{\fix{\widehat{\gamma\sp{j}}[\varphi_{k+1}]}\cap V_{\ell_j}:j\in B\}
  $$
  and for each $j\in B$, let
  $$
     S_{<j}=\bigcup\{\fix{\widehat{\gamma\sp{i}}[\varphi_{k+1}]}\cap V_{\ell_i}:i\in B,i<j\}.
  $$
  
  The set $Z$ will be constructed by a recursive procedure on $j\in B$. Let us describe our inductive hypothesis next. 
  
  We will assume that there is $\eta_0\in\homeo{\csf}$ such that $\varphi_{k+1}\subset\eta_0$, $\sigma(\eta,\eta_0)<\delta\sp{\prime\prime}$ for some appropriate $\delta\sp{\prime\prime}>0$. Given $i\in B$ with $i<j$ and $y\in \fix{\widehat{\gamma\sp{i}}[\varphi_{k+1}]}\cap V_{\ell_i}$, we shall assume that there are clopen sets $W_{i,y}\sp{0}$, $W_{i,y}\sp{1}$ with the following properties, for $\pi=\eta_0$:
  \begin{enumerate}[label=$(\arabic*)_\pi$]
   \item $y\in(\widehat{\mu\sp{i}}\hat{h})[\pi][W_{i,y}\sp{0}]$,
   \item $W_{i,y}\sp{1}=\hat{f}_i[\pi][W_{i,y}\sp{0}]$, and
   \item if $i\sp\prime\in (A\cup B)\setminus\{i\}$, then $(W_{i,y}\sp{0}\cup W_{i,y}\sp{1})\cap\hat{f}_{i\sp\prime}[\pi][W_{i,y}\sp{0}]=\emptyset$.
  \end{enumerate}
  
  The advantage of properties $(1)_\pi$, $(2)_\pi$ and $(3)_\pi$ is that they are open. That is, since $(1)_{\eta_0}$, $(2)_{\eta_0}$ and $(3)_{\eta_0}$ hold, if $\eta\sp{\prime\prime}$ is close enough to $\eta_0$, then $(1)_{\eta\sp{\prime\prime}}$, $(2)_{\eta\sp{\prime\prime}}$ and $(3)_{\eta\sp{\prime\prime}}$ also hold. Thus, in this step we only need to worry about points in $\fix{\widehat{\gamma\sp{j}}[\varphi_{k+1}]}\cap V_{\ell_j}$ as long as we do small modifications.
  
  In what follows, we will assume that $\fix{\widehat{\gamma\sp{j}}[\varphi_{k+1}]}\cap V_{\ell_j}$ consists of only one point $y_0$. It is not hard to extend this argument to the case when $\fix{\widehat{\gamma\sp{j}}[\varphi_{k+1}]}\cap V_{\ell_j}$ is finite of arbitrary cardinality. Let $x_0$ be the point of $V$ with $(\widehat{\mu\sp{j}}\hat{h})[\eta_0](x_0)=y_0$. 
  
  Notice that $\hat{f}_j[\eta_0](x_0)=x_0$. Notice that $(\widehat{\mu\sp{i}}\hat{h})(x_0)\notin S_{<j}$ because $(3)_{\eta_0}$  implies that there are no fixed points of $\hat{f}_j[\eta_0]$ in the clopen sets $W_{i,y}\sp{0}$ with $i\in B$ and $i<j$. Let $W$ be a clopen set containing $x_0$ and for each $i\in B$, $i\leq j$, let $W_i=\widehat{\mu\sp{i}}\hat{h}[\eta_0][W]$; choose $W$ so that $W_i\cap \fix{\widehat{\gamma\sp{i}}[\varphi_{k+1}]}=\emptyset$ whenever $i\in B$, $i\leq j$. 
  
  We have already explained in Step 1 that regardless of whether $\{\widehat{\gamma\sp{i}}:i\in B\}$ are nice or not, there exists a nice word equivalent to it so we can also use our inductive hypothesis. That is, we may find $\eta_1\in\homeo{\csf}$ such that $\varphi_{k+1}\subset\eta_1$, $\sigma(\eta_0,\eta_1)<\delta\sp{\prime\prime}$ and $W_i\cap \fix{\widehat{\gamma\sp{i}}[\eta_1]}=\emptyset$ for $i\in B$, $i\leq j$. Naturally, we choose $\eta_1$ close enough to $\eta_0$ so that $(1)_{\eta_1}$, $(2)_{\eta_1}$ and $(3)_{\eta_1}$ hold and by \ref{far_preserved}, $W_i=\widehat{\mu\sp{i}}\hat{h}[\eta_1][W]$ for $i\in B$, $i\leq j$.
  
  From these conditions, it follows that there is $x_1\in V\cap W$ with $(\widehat{\mu\sp{j}}\hat{h})[\eta_1](x_1)=y_0$. This point has the additional property that $\hat{f}_i[\eta_1](x_1)\neq x_1$ for $i\in A$ or $i\in B$, $i<j$; and $(\widehat{\mu\sp{i}}\hat{h})(x_1)\notin S_{<j}$. Thus, we may assume that $x_1\notin W_{i,y}\sp{0}\cup W_{i,y}\sp{1}$ for all $y\in S_{<j}$, by shrinking the clopen sets if necessary. 
  
  Let $W_0$ and $W_1$ be clopen sets such that $x_1\in W_0\cap W_1$, $W_1=\hat{f}_j[\eta_1][W_0]$ and $(W_0\cup W_1)\cap (W_{i,y}\sp{0}\cup W_{i,y}\sp{1})=\emptyset$ for all $y\in S_{<j}$. We may also choose $W_0$ in such a way that $W_0\cap\hat{f}_i[\eta_0][W_0]=\emptyset$ whenever $i\in B$ and $i<j$. In fact, by the discussion before the statement of Step 2, let us also choose $W_0$ so that $W_0\cap\hat{f}_i[\eta_0][W_0]=\emptyset$ whenever $i\in A$.
  
  Given $i\in B$ with $i>j$, let $\widehat{\nu\sp{i}}$ be the word such that $\hat{f}_i=\widehat{\nu\sp{i}}\hat{f}_j$. Notice that $\widehat{\nu\sp{i}}$ is short (but not nice). Since the set
  $$
  \bigcup\{\fix{\widehat{\nu\sp{i}}}:i\in B,i>j\}
  $$
  is finite, there is a point $x_2\in W_0\cap W_1$ that misses it and $\rho(x_2,x_1)<\delta\sp{\prime\prime}$. Let $a=\eta_1(x_1)$. So consider $\eta_2\in\homeo{\csf}$ with $\eta_2(x_2)=a$ and $\eta_2$ is equal to $\eta_1$ outside some clopen set $W\subset W_0\cap W_1$ containing $x_1$ and $x_2$ of diameter $<\delta\sp{\prime\prime}$.
  
  Now, by our choice of $W_0$ and $\eta_2$, it is not hard to argue that $\widehat{\mu\sp{j}}[\eta_1](a)=\widehat{\mu\sp{j}}[\eta_2](a)$. Then, it follows that $\widehat{\mu\sp{j}}\hat{h}[\eta_2](x_2)=y_0$ so $\hat{f}_j[\eta_2](x_2)=x_2$. If we choose $W$ small enough (thus, $x_2$ close enough to $x_1$), $\eta_2$ will have the properties of $\eta_1$ that we have mentioned before, with $x_2$ taking the place of $x_1$. Moreover, we have the advantage that $x_2\notin\fix{\widehat{\nu\sp{i}}[\varphi_{k+1}]}$ for $i\in B$ with $i>j$.
  
  By shrinking $W_0$ we may assume that $W_1\cap\fix{\widehat{\nu\sp{i}}[\varphi_{k+1}]}=\emptyset$ for $i\in B$ with $i>j$. By our inductive assumption, there is $\eta\sp{\prime\prime}\in\homeo{\csf}$ such that $\sigma(\eta\sp{\prime\prime},\eta_1)<\delta\sp{\prime\prime}$, $\varphi_{k+1}\subset\eta\sp{\prime\prime}$ and $W_0\cap W_1\cap\fix{\widehat{\nu\sp{i}}}=\emptyset$ for all $i\in B$, $i>j$. Let us define $\eta_3\in\homeo{\csf}$ to be equal to $\eta\sp{\prime\prime}$ for points in $W_0\cap W_1$ and equal to $\eta_2$ otherwise. If $\eta\sp{\prime\prime}$ is close enough to $\eta_2$ so that $\eta\sp{\prime\prime}[W_0\cap W_1]=\eta_2[W_0\cap W_1]$ (by \ref{far_preserved}), $\eta_3$ will be well-defined. 
  
  Let $x_3\in W_0$ be such that $\eta_3(x_3)=a$. Since $\widehat{\mu\sp{j}}[\eta_3](a)=\widehat{\mu\sp{j}}[\eta_2](a)=y$, we obtain that $\widehat{\mu\sp{j}}\hat{h}[\eta_3](x_3)=y_0$ so $\hat{f}_j[\eta_3](x_3)=x_3$ and $x_3\notin\fix{\widehat{\nu\sp{i}}[\eta_3]}$ for $i\in B$ with $i>j$. Finally, shrink $W_0$ so that $x_3\in W_0$ and $W_0\cap\fix{\widehat{\nu\sp{i}}[\eta_3]}=\emptyset$ for all $i\in B$ with $i>j$. Define $W_{j,y_0}\sp{0}=W_0$ and $W_{j,y_1}\sp{0}=W_1$. Then it follows that $(1)_{\eta_3}$, $(2)_{\eta_3}$ and $(3)_{\eta_3}$ hold for $y=y_0$.
  
  So we have finished the recursive construction of the clopen sets $W_{i,y}\sp{0}$ and $W_{i,y}\sp{1}$ for all $y\in\fix{\widehat{\gamma\sp{i}}[\varphi_{k+1}]}\cap V_{\ell_i}$, where $i\in B$. Also, we have a homeomorphism $\eta\sp{\prime\prime\prime}\in\homeo{\csf}$ with $\varphi_{k+1}\subset\eta\sp{\prime\prime\prime}$ such that $(1)_{\eta\sp{\prime\prime\prime}}$, $(2)_{\eta\sp{\prime\prime\prime}}$ and $(3)_{\eta\sp{\prime\prime\prime}}$ hold for all $y\in S$. By choosing $\delta\sp{\prime\prime}$ carefully, we may assume that $\sigma(\eta,\eta\sp{\prime\prime\prime})<\delta\sp\prime/2$. We are thus ready to construct $\eta\sp\prime\in\homeo{\csf}$ required by the statement of Step 2.

  Fix some $j\in B$ and $y\in \fix{\widehat{\gamma\sp{j}}[\varphi_{k+1}]}\cap V_{\ell_j}$. Let $\widehat{\alpha\sp{j}}$ and $\widehat{\beta\sp{j}}$ be the words such that $\hat{f}=(\widehat{\alpha\sp{j}})\sp{-1}(\widehat{\beta\sp{j}})(\widehat{\alpha\sp{j}})(\hat{f_j})$ and $\widehat{\beta\sp{j}}$ is short. We need to do some modifications in order to remove some fixed points from $\widehat{\beta\sp{j}}$. However, we will not be as detailed as before because the arguments are completely analogous. First, we may modify $\eta\sp{\prime\prime\prime}$ inside $W_{j,y}\sp{0}$ so that there is $x\in W_{j,y}\sp{0}\cap W_{j,y}\sp{1}$ with $\widehat{\mu\sp{j}}\hat{h}(x)=y$ and $x\notin \fix{\widehat{\beta\sp{j}}[\varphi_{m}]}$. After this, we may shrink $W_{j,y}\sp{0}$ so that $\widehat{\alpha\sp{j}}[\eta\sp{\prime\prime\prime}][W_{j,y}\sp{1}]\cap\fix{\widehat{\beta\sp{j}}[\varphi_{m}]}=\emptyset$. Then, by another modification of $\eta\sp{\prime\prime\prime}$, we may further assume that $\widehat{\alpha\sp{j}}[\eta\sp{\prime\prime\prime}][W_{j,y}\sp{1}]\cap\fix{\widehat{\beta\sp{j}}[\eta\sp{\prime\prime\prime}]}=\emptyset$. These three steps can be proved in essentially the same way as similar situations before.

  Thus, we may assume that $\eta\sp\prime\in\homeo{\csf}$ is such that $(1)_{\eta\sp{\prime}}$, $(2)_{\eta\sp{\prime}}$ and $(3)_{\eta\sp{\prime}}$ hold for all $y\in S$ and moreover, $\widehat{\alpha\sp{j}}[\eta\sp\prime][W_{j,y}\sp{1}]\cap\fix{\widehat{\beta\sp{j}}[\eta\sp\prime]}=\emptyset$ every time $j\in B$ and $y\in \fix{\widehat{\gamma\sp{j}}[\varphi_{k+1}]}\cap V_{\ell_j}$. 
  
  We are only left to define $Z$. For each $j\in B$ and $y\in \fix{\widehat{\gamma\sp{j}}[\varphi_{k+1}]}\cap V_{\ell_j}$, let $x_{j,y}\in W_{j,y}\sp{0}$ be such that $\widehat{\mu\sp{j}}\hat{h}[\eta\sp\prime](x_{j,y})=y$. Then, $\hat{f}_{j}[\eta\sp\prime](x_{j,y})=x_{j,y}$. Since $x_{j,y}\in W_{j.y}\sp{1}$, then $\widehat{\alpha\sp{j}}[\eta\sp\prime](x_{j,y})$ is not a fixed point of $\widehat{\beta\sp{j}}[\eta\sp\prime]$. Thus, $(\widehat{\alpha\sp{j}})\sp{-1}(\widehat{\beta\sp{j}})(\widehat{\alpha\sp{j}})[\eta\sp\prime](x_{j,y})\neq x_{j,y}$. From this it follows that 
  $\hat{f}[\eta\sp\prime](x_{j,y})\neq x_{j,y}$. So simply define $Z$ to be a clopen set containing the finite set $\{x_{j,y}:j\in B, y\in S\}$ and such that $Z\cap \hat{f}[\eta\sp\prime][Z]=\emptyset$.
  
  This concludes the proof of Step 2. As in the case of Step 1, in what follows we will assume that $\eta$ in fact has the properties in the statement of Step 2. This is done in order to simplify notation. Let $V\sp\prime=V\setminus Z$. For $j\in A$ we already know that $V\sp\prime\cap\fix{\hat{f}_j[\eta]}=\emptyset$, next we would like to obtain this for $j\in B$. 
  
  Fix $j\in B$. By the definition of $Z$, we know that $\widehat{\mu\sp{j}}\hat{h}[\eta][V\sp\prime]$ does not intersect $\fix{\widehat{\gamma\sp{j}}[\varphi_{k+1}]}$. Since $\widehat{\gamma\sp{j}}$ is short, we have already argued that we may apply our inductive hypothesis. Thus, we will assume that $\eta$ is already such that $\widehat{\mu\sp{j}}\hat{h}[\eta][V\sp\prime]$ does not intersect $\fix{\widehat{\gamma\sp{j}}[\eta]}$. This easily implies that $V\sp\prime\cap\fix{\hat{f}_j[\eta]}=\emptyset$.
  
  Thus, we may assume that $V\sp\prime\cap\fix{\hat{f}_j[\eta]}=\emptyset$ for all $j\in A\cup B$. By compactness, there exists a partition $\V$ of $V\sp\prime$ into clopen sets such that every time $W\in\V$ and $j\in A\cup B$, then $W\cap\hat{f}_j[\eta][W]=\emptyset$.
  
  \vskip5pt
  \noindent{\sffamily Step 3:} Let $W\in\V$ and $\delta\sp\prime>0$. Then there exists $\eta\sp\prime\in\homeo{\csf}$ such that $\varphi_{k+1}\subset \eta\sp\prime$, $\sigma(\eta,\eta\sp\prime)<\delta\sp\prime$ and $W\cap \fix{\hat{f}}=\emptyset$.
  \vskip5pt
  
  Let $W\sp\prime=\eta[W]$. We can find a finite partition $W=\bigcup\{W_i:i<t\}$ into clopen sets, where the diameter of both $W_i$ and $W_i\sp\prime=\eta[W_i]$ is less than $\delta\sp\prime$ for all $i<t$. Write $\hat{f}=\hat{\alpha}\hat{h}$. For each $i<t$, let $W\sp\prime_i=C_i\sp{0}\cup C_i\sp{1}$ be a partition into pairwise disjoint clopen sets. Also, for each $i<t$, choose a partition $W_i=W_i\sp{0}\cup W_i\sp{1}$ so that $\hat{\alpha}[\eta][C_i\sp{0}]\subset W_i\sp{1}$ and $\hat{\alpha}[\eta][C_i\sp{1}]\subset W_i\sp{0}$. 
  
  Let us define $\eta\sp\prime\in\homeo{\csf}$ in the following way: $\eta\sp\prime\!\!\restriction_{\csf\setminus W}=\eta\!\!\restriction_{\csf\setminus W}$ and for each $i<t$, $\eta\sp\prime\!\!\restriction_{W_i\sp{0}}:W_i\sp{0}\to C_i\sp{0}$ and $\eta\sp\prime\!\!\restriction_{W_i\sp{1}}:W_i\sp{1}\to C_i\sp{1}$ are arbitrary homeomorphisms. From the definition of the partition of $W$ it follows that $\sigma(\eta,\eta\sp\prime)<\delta\sp\prime$. Also, $\varphi_{k+1}\subset \eta\sp\prime$ because $W\cap D_{k+1}=\emptyset$.
  
  Now, let us see that $\hat{f}[\eta\sp\prime]$ does not have fixed points. First, recall that if $j\in A\cup B$, then $W\cap\hat{f}_j[\eta][W]=\emptyset$. From this, it is not too hard to argue that $\hat{\alpha}[\eta]\!\!\restriction_{W\sp\prime}=\hat{\alpha}[\eta\sp\prime]\!\!\restriction_{W\sp\prime}$. Let $i<t$. Then 
  $$
  \hat{f}[\eta\sp\prime][W_i\sp{0}]=\hat{\alpha}[\eta\sp\prime][C_i\sp{0}]=\hat{\alpha}[\eta][C_i\sp{0}]\subset W_i\sp{1},
  $$
  and it can be proved in an analogous way that $\hat{f}[\eta\sp\prime][W_i\sp{1}]\subset W_i\sp{0}$. Thus, we obtain that $W_i\cap\fix{\hat{f}}=\emptyset$ for all $i<t$. Thus, $W\cap \fix{\hat{f}}=\emptyset$.
  
  This concludes the proof of Step 3. Applying Step 3 for all $W\in\V$, it easily follows that $V\sp\prime\cap\fix{\hat{f}}=\emptyset$. So this finally completes the proof of the claim.
   \end{proof}

   So just choose $U$ to be equal to the complement of $\bigcup\{B(x,\frac{1}{i+1}):x\in\fix{\hat{f}[\varphi_{k+1}]}\}$ and use Claim 3. We immediately obtain the conclusion of (f) for $m=n=k+1$. This concludes the recursive construction. And, as discussed above, this is enough to complete the proof of Lemma \ref{thelemma}.
 
\section{Generalizations about cofinitary groups}\label{section-groups}

We incidentaly obtain as a corollary that for the Cantor set, being CDH is witnessed by a proper subgroup of $\homeo{\csf}$, the elements of which have a special property. We will say that a topological space $X$ is CDH with respect to a group $\G\subset\homeo{X}$ if every time $D,E\subset X$ are countable dense sets, there exists $h\in\G$ such that $h[D]=E$.
 
 \begin{coro}\label{cofin-CDH}
  CH implies that there exists a cofinitary group $\G\subset\homeo{\csf}$ such that $\csf$ is CDH with respect to $\G$.
 \end{coro}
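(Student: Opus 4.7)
The plan is to build the desired group $\G$ as the union of a continuous increasing chain of countable cofinitary subgroups $\{\G_\alpha:\alpha<\omega_1\}$ of $\homeo{\csf}$, using Lemma \ref{thelemma} at every successor stage to absorb one more pair of countable dense subsets. Under CH, the set of countable dense subsets of $\csf$ has cardinality $\mathfrak{c}=\omega_1$, so the set of pairs of such subsets can be enumerated as $\{\pair{D_\alpha,E_\alpha}:\alpha<\omega_1\}$.

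I would start the recursion with $\G_0=\{\id\}$, which is trivially cofinitary. At a successor step $\alpha+1$, assuming $\G_\alpha$ is a countable cofinitary subgroup of $\homeo{\csf}$, apply Lemma \ref{thelemma} to $\G_\alpha$ and the pair $\pair{D_\alpha,E_\alpha}$ to obtain a homeomorphism $H_\alpha\in\homeo{\csf}$ with $H_\alpha[D_\alpha]=E_\alpha$ such that $\G_{\alpha+1}:=\gen{\G_\alpha\cup\{H_\alpha\}}$ is still cofinitary. Since $\G_{\alpha+1}$ is generated by a countable set, it remains countable. At a countable limit stage $\lambda<\omega_1$, set $\G_\lambda=\bigcup_{\alpha<\lambda}\G_\alpha$; this is a countable subgroup, and it is cofinitary because every non-identity element lies in some $\G_\alpha$ with $\alpha<\lambda$, which is cofinitary by induction.

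Finally, define $\G=\bigcup_{\alpha<\omega_1}\G_\alpha$. Then $\G$ is a subgroup of $\homeo{\csf}$. It is cofinitary by the same reasoning as in the limit case: any non-identity $h\in\G$ belongs to some countable cofinitary $\G_\alpha$, so $\fix{h}$ is finite. To verify that $\csf$ is CDH with respect to $\G$, let $D,E$ be countable dense subsets of $\csf$. By the enumeration there is some $\alpha<\omega_1$ with $\pair{D_\alpha,E_\alpha}=\pair{D,E}$, and by construction $H_\alpha\in\G_{\alpha+1}\subset\G$ satisfies $H_\alpha[D]=E$.

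The main obstacle — namely producing, at each stage, a single homeomorphism matching the prescribed pair of countable dense sets while preserving the cofinitary property of the group — has already been resolved by Lemma \ref{thelemma}. Given that lemma, the corollary reduces to the routine transfinite bookkeeping sketched above, and CH enters only to guarantee that $\omega_1$ steps suffice to cover every pair of countable dense subsets.
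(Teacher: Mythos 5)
Your proof is correct and follows the same route as the paper, which also enumerates all pairs of countable dense subsets in a sequence of length $\omega_1$ and recursively applies Lemma \ref{thelemma}; you have simply spelled out the transfinite recursion and the limit-stage bookkeeping in more detail than the paper's one-line argument.
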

 \begin{proof}
  Enumerate all pairs countable dense subsets in a sequence of length $\omega_1$ and recursively apply Lemma \ref{thelemma}.
 \end{proof}

 So it is natural to ask whether CH is necessary.
 
 \begin{ques}
  Is there a cofinitary group $\G\subset\homeo{\csf}$ such that $\csf$ is CDH with respect to $\G$, in ZFC?
 \end{ques}
 
 It turns out that if we assume Martin's axiom we can prove a version of Lemma \ref{thelemma} which gives us the following result.
 
 \begin{thm}\label{cofin-CDH-MA}
  MA implies that there exists a cofinitary group $\G\subset\homeo{\csf}$ such that $\csf$ is CDH with respect to $\G$.
 \end{thm}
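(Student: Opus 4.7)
The overall strategy is to mirror Corollary \ref{cofin-CDH}, but since MA implies there are $\mathfrak{c}$ countable dense subsets of $\csf$ the recursion has to have length $\mathfrak{c}$ rather than $\omega_1$. The plan splits into two parts. First, I would establish the following \emph{MA-version of Lemma \ref{thelemma}}: if $\G\subset\homeo{\csf}$ is cofinitary with $|\G|<\mathfrak{c}$ and $D,E$ are countable dense subsets of $\csf$, then there is $H\in\homeo{\csf}$ with $H[D]=E$ and $\gen{\G\cup\{H\}}$ cofinitary. Granting this, enumerate all pairs of countable dense subsets as $\{\pair{D_\xi,E_\xi}:\xi<\mathfrak{c}\}$ and recursively build cofinitary groups $\G_\xi\subset\homeo{\csf}$ of size $<\mathfrak{c}$, taking unions at limits (MA gives $\mathfrak{c}$ regular) and applying the MA-version at successors to obtain $H_\xi$ with $H_\xi[D_\xi]=E_\xi$. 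The union $\G=\bigcup_{\xi<\mathfrak{c}}\G_\xi$ is then the desired group; cofinitariness persists in the union because each non-identity element lies in some $\G_\xi$.

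To prove the MA-version, I would recast the inductive construction of Section \ref{section-lemma} as a ccc forcing. Conditions in $\mathbb{P}=\mathbb{P}_{\G,D,E}$ are triples $p=\pair{\varphi_p,\Sigma_p,\{U_{\hat{f}}^p:\hat{f}\in\Sigma_p\}}$ where $\varphi_p$ is a finite bijection from a finite subset of $D$ onto a finite subset of $E$ that extends to some autohomeomorphism of $\csf$, $\Sigma_p$ is a finite set of nice words in $\G[\hat{h}]\setminus\G$, and each $U_{\hat{f}}^p$ is a clopen subset of $\csf$ of rational diameter containing $\fix{\hat{f}[\varphi_p]}$. Order $\mathbb{P}$ by $q\leq p$ iff $\varphi_q\supset\varphi_p$, $\Sigma_q\supset\Sigma_p$, $U_{\hat{f}}^q\subset U_{\hat{f}}^p$ for all $\hat{f}\in\Sigma_p$, and, crucially, $\fix{\hat{f}[\varphi_q]}=\fix{\hat{f}[\varphi_p]}$ for every $\hat{f}\in\Sigma_p$. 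Since $D\cup E$ is countable and the ``type'' data $\pair{\Sigma_p,\{U_{\hat{f}}^p\}}$ ranges over a countable set, $\mathbb{P}$ is $\sigma$-centered (conditions sharing $\varphi_p$ are compatible by taking unions of $\Sigma$-components and intersections of clopen neighborhoods), and hence ccc.

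The dense subsets of $\mathbb{P}$ that MA must meet are: for each $d\in D$ and $e\in E$, the sets $\{p:d\in\mathrm{dom}(\varphi_p)\}$ and $\{p:e\in\mathrm{range}(\varphi_p)\}$; and for each nice word $\hat{f}$ and each rational $\epsilon>0$, the set $\mathcal{E}_{\hat{f},\epsilon}=\{p:\hat{f}\in\Sigma_p\text{ and }\mathrm{diam}(U_{\hat{f}}^p)<\epsilon\}$. The total count of dense sets is $|\G|+\aleph_0<\mathfrak{c}$, so MA supplies a filter $\mathcal{F}$ meeting all of them; the union $\bigcup\{\varphi_p:p\in\mathcal{F}\}$ then extends uniquely to the required $H\in\homeo{\csf}$. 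The back-and-forth densities give $H[D]=E$, while the stabilization $\fix{\hat{f}[H]}=\fix{\hat{f}[\varphi_p]}$ for any $p\in\mathcal{F}$ with $\hat{f}\in\Sigma_p$, together with the shrinking of the $U_{\hat{f}}^p$, enforces that $\gen{\G\cup\{H\}}$ is cofinitary. The main obstacle is verifying density: adding a new pair $\pair{d,e}$ to $\varphi_p$ while preserving $\fix{\hat{f}[\varphi_q]}=\fix{\hat{f}[\varphi_p]}$ for all $\hat{f}\in\Sigma_p$ is a finite-condition recasting of Claim \ref{densechoices}, and shrinking the diameter of $U_{\hat{f}}^p$ for a freshly added word is a finite-condition recasting of Claim \ref{hardclaim} together with its Steps 1--3. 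The bulk of the work therefore consists of this re-bookkeeping of the $\omega$-stage recursion as a forcing argument, without introducing essentially new ingredients.
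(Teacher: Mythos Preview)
Your overall plan---reduce to an MA-version of Lemma~\ref{thelemma} and then run a length-$\mathfrak{c}$ recursion---matches the paper exactly, and the paper likewise leaves the density verifications to the analogues of Claims~\ref{densechoices} and~\ref{hardclaim}. The substantive disagreement is in the design of the poset, and there your proposal has a genuine gap.

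Your conditions carry only finite combinatorial data: a finite partial bijection $\varphi_p$, a finite set of words $\Sigma_p$, and clopen sets $U^p_{\hat f}\supset\fix{\hat f[\varphi_p]}$. From a generic filter you obtain a bijection $\bigcup_p\varphi_p\colon D\to E$, and you assert that it ``extends uniquely to the required $H\in\homeo{\csf}$''. This is not true in general: an arbitrary bijection between two countable dense subsets of $\csf$ need not extend to any autohomeomorphism, and nothing in your conditions or dense sets forces metric coherence of the $\varphi_p$'s. Relatedly, your side condition on $U^p_{\hat f}$ only says that $U^p_{\hat f}$ contains $\fix{\hat f[\varphi_p]}$; it says nothing linking $U^p_{\hat f}$ to the fixed points of $\hat f$ evaluated at an actual homeomorphism, so shrinking the $U^p_{\hat f}$ does not by itself bound $\fix{\hat f[H]}$. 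In short, by deleting the homeomorphism from the condition you have lost both the object the generic is supposed to approximate and the mechanism that trades ``small $U^p_{\hat f}$'' for ``few fixed points of $\hat f[H]$''.

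The paper's poset repairs exactly this: a condition is $p=\pair{h_p,\varphi_p,n_p,\mathcal{W}_p}$ with $h_p\in\homeo{\csf}$, $\varphi_p\subset h_p$, and the key requirement $\fix{\hat f[h_p]}\subset\bigcup\{B(x,1/n_p):x\in\fix{\hat f[\varphi_p]}\}$ for $\hat f\in\mathcal{W}_p$; extensions satisfy $\sigma(h_p,h_q)<1/n_p-1/n_q$, so along a filter the $h_p$'s form a Cauchy sequence in $\homeo{\csf}$ whose limit is $H$. This restores both missing pieces at once. The price is that the conditions now range over the continuum-sized set $\homeo{\csf}$, so $\sigma$-centeredness is no longer a triviality about ``same $\varphi_p$''. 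The paper handles this with an extra idea you do not have: condition~(7) is \emph{open} in $h_p$, so one can record a witness $M_p$ and then group conditions by $(\varphi,n,M,h)$ with $h$ taken from a fixed countable dense $\mathfrak{D}\subset\homeo{\csf}$ and $\sigma(h,h_p)<1/(2M)$. That is the step that makes the richer poset $\sigma$-centered, and it is what your purely combinatorial conditions were trying to shortcut.
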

 \begin{proof}
 The proof is analogous to the proof of Corollary \ref{cofin-CDH}: enumerate all pairs of countable dense sets and recursively construct a cofinitary group. Clearly, we need some kind of version of Lemma \ref{thelemma} for groups of cardinality $<\mathfrak{c}$ under MA.
 
 Let $D,E$ be two countable dense sets of $\csf$ and let $\G\subset\homeo{\csf}$ be a cofinitary subgroup of cardinality $<\mathfrak{c}$. We would like to define $H\in\homeo{\csf}$ such that $H[D]=E$ and $\gen{\G\cup\{H\}}$ is cofinitary. We will define a forcing $\mathbb{P}$ and argue that it is ccc. It will remain to use MA to extract a generic subset from $\mathbb{P}$ and use it to define $H$, this part we leave to the reader who can mimic the proof of Lemma \ref{thelemma}.
 
 We shall use all terminology from Section \ref{section-lemma}. Recall that the metric $\rho$ in $\csf$ has its open balls clopen and that $\sigma$ is the induced metric in $\homeo{\csf}$. A set of short words $\mathcal{W}\subset\G[\hat{h}]$ will be called \emph{downwards closed} if every time $\hat{f}\in\mathcal{W}$ and there are words $\hat{g},\hat{\alpha},\hat{\beta}$ such that $\hat{f}=\hat{\alpha}\hat{g}\hat{\beta}$ is a reduced expression and $\hat{g}$ is short, then $\hat{g}\in\mathcal{W}$.
 
 So define $p\in\mathbb{P}$ if and only if $p=\pair{h_p,\varphi_p,n_p,\mathcal{W}_p}$, where:
 \begin{enumerate}
  \item $h_p\in\homeo{\csf}$,
  \item $\varphi_p\subset D\times E$ is a finite bijection,
  \item $\varphi_p\subset h_p$,
  \item for all $1\leq\ell<\omega$, $\fix{\varphi_p\sp\ell}=\emptyset$.
  \item $1\leq n_p<\omega$,
  \item $\mathcal{W}_p$ is a finite set of short words of $\G[\hat{h}]$ that is downwards closed, and
  \item if $\hat{f}\in\mathcal{W}_p$, the set $\fix{\hat{f}[h_p]}$ is a subset of\\ $\bigcup\{B(x,1/n_p):x\in\fix{\hat{f}[\varphi_p]}\}$.
 \end{enumerate}
We define $q\leq p$ in $\mathbb{P}$ if either $p=q$ or the following hold:
\begin{enumerate}[label=(\roman*)]
 \item $n_p<n_q$,
 \item $\varphi_p\subset\varphi_q$,
 \item $\sigma(h_p,h_q)<1/n_p-1/n_q$, and
 \item if $\hat{f}\in\mathcal{W}_p$, then $\fix{\hat{f}[\varphi_p]}=\fix{\hat{f}[\varphi_q]}$.
\end{enumerate}

Naturally, all of these properties have some corresponding statement in Section \ref{section-lemma}. Now we prove that in fact $\mathbb{P}$ is $\sigma$-centered. For this, we have to write $\mathbb{P}$ as a countable union of centered subsets.

For each $p\in\mathbb{P}$, notice that condition (7) is open. This means that there exists $1\leq M_p<\omega$ such that if $\hat{f}\in\mathcal{W}$ and $g\in\homeo{\csf}$ is such that $\sigma(g,h_p)<1/M_p$, then $\fix{\hat{f}[g]}$ is still contained in $\bigcup\{B(x,1/n_p):x\in\fix{\hat{f}[\varphi_p]}\}$.

Fix some countable dense set $\mathfrak{D}\subset\homeo{\csf}$. So for fixed finite $\varphi\subset D\times E$, $n,M<\omega$ and $h\in\mathfrak{D}$, consider the set:
$$
\mathbb{P}\sp\ast=\{p\in\mathbb{P}:M>M_p,\ \sigma(h,h_p)<1/(2M),\ 1/M<1/n-1/(n+1),\ \varphi_p=\varphi,\ n_p=n\}.
$$
Since there are countably many such sets, it is enough to prove that $\mathbb{P}\sp\ast$ is centered.

So let $Q\subset\mathbb{P}$ be a finite set, we need to construct a common extension $r\in\mathbb{P}$. Define $n_r=n+1$ and $\varphi_r=\varphi$. Clearly, these definitions are enough to ensure properties (i), (ii) and (iv) of the extension. Also, let 
$$
\mathcal{W}_r=\bigcup\{\mathcal{W}_q:q\in Q\},
$$
this is clearly a finite set of short words that is downwards closed. 

Finally, choose any $h_r\in\homeo{\csf}$ with $\varphi\subset h_r$ and $\sigma(h,h_r)<1/(2M)$, this is not hard to do. Notice that $h_r$ satisfies property (7) because $M>M_q$ for all $q\in Q$. Also, (iii) holds trivially for all $q\in Q$. Thus, this such constructed $p$ is an element of $\mathbb{P}$ that is a common extension to all elements of $Q$.

Thus, our poset is $\sigma$-centered. It remains to find adequate dense sets that will allow us to construct the desired homeomorphism $H$ using a generic set of $\mathbb{P}$. However, this argument is exactly analogous to the proof of Lemma \ref{thelemma} in Section \ref{section-lemma}. Thus, we will leave this work to the reader.
\end{proof}

 Another similar question is whether fixed points are really necessary in these types of subgroups.

  \begin{ques}
  Let $\G\subset\homeo{\csf}$ be a subgroup such that $\csf$ is CDH with respect to $\G$. Does there exist some $h\in\G$ such that $\fix{h}\neq\emptyset$?
 \end{ques}
 
 Then, we can also ask for similar properties for other CDH spaces.
 
 \begin{ques}
  Let $X$ be the the space of irrationals, a metrizable manifold or the Hilbert cube.
  \begin{itemize}
   \item[(a)] Is there a cofinitary group $\G\subset\homeo{X}$ with $X$ is CDH with respect to $\G$?
   \item[(b)] In case that $X$ does not have the fixed point property, is there a $\G\subset\homeo{X}$ such that $X$ is CDH with respect to $\G$ and $\fix{\G}=\emptyset$?
  \end{itemize}
 \end{ques}

\section{Final remarks about compact CDH spaces}\label{section-final}
 
 The first natural question is if the space constructed in the proof of Theorem \ref{main} can be constructed with no further hypothesis from ZFC.

\begin{ques}
Is it consistent that all infinite compact Hausdorff CDH spaces contain topological copies of the Cantor set?
\end{ques}

From the fact that the ZFC example of a compact CDH space with uncountable weight constructed in \cite{hg-hr-vm} is linearly ordered, a natural question is whether the example constructed in this paper can be linearly ordered. From the proof it seems hard to try to preserve the order relation in the recursive construction.

\begin{propo}
An infinite, linearly ordered, CDH, compact and Hausdorff space must contain topological copies of the Cantor set.
\end{propo}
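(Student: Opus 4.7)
The plan is to derive a contradiction by producing two countable dense subsets of $X$ that no self-homeomorphism can interchange. First I would perform the standard reductions. Since $X$ is CDH it is separable, and a separable compact LOTS is first countable (each point has countable character from each side via the dense set). An infinite countable compact Hausdorff space is scattered and admits dense subsets distinguished by the number of non-isolated points they contain — a homeomorphism invariant — so no such space is CDH, forcing $X$ to be uncountable. Let $P$ be the perfect kernel of $X$: it is closed, crowded, and compact, and its complement is open separable scattered regular, hence countable. The perfect kernel inherits CDH via the trick of extending dense subsets of $P$ by the entire countable set $X\setminus P$, so I may replace $X$ by $P$ and assume $X$ is crowded. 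Finally, any nontrivial closed subinterval of $X$ without internal jumps would be a separable compact linear continuum, hence order-homeomorphic to $[0,1]$ by Cantor's characterization of $\mathbb{R}$, and thus would contain a Cantor subset. So $X$ may be assumed zero-dimensional.

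Next I would collapse adjacent pairs. Let $\sim$ identify each point $a\in X$ with its immediate successor $a^+$, whenever the latter exists. The quotient $L = X/{\sim}$ is a compact, separable, crowded LOTS with no jumps, hence order-homeomorphic to $[0,1]$ by Cantor. Let $q\colon X\to[0,1]$ be the quotient map and $J = \{t\in[0,1] : |q^{-1}(t)|=2\}$; since $X\not\cong[0,1]$ the set $J$ is nonempty. Fix $j_0\in J$ and a countable dense $Q\subseteq[0,1]$ with $j_0\in Q$, and for $t\in J$ write $q^{-1}(t)=\{t^-,t^+\}$. Define
\[
D = \{t^- : t\in Q\cap J\} \cup q^{-1}(Q\setminus J), \qquad E = D \cup \{t^+ : t\in Q\cap J\}.
\]
Both sets are countable, and both are dense in $X$: every nonempty clopen interval of $X$ projects to a nondegenerate subinterval of $[0,1]$ whose interior meets $Q$, and the preimage under $q$ of that meeting point intersects both $D$ and $E$. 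Setting $S^\ast = \{t\in J : q^{-1}(t)\subseteq S\}$ for $S\subseteq X$, we have $D^\ast = \emptyset$ while $j_0\in E^\ast$.

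The final step, and the main obstacle, is to verify that any self-homeomorphism $h$ of $X$ sends order-adjacent pairs to order-adjacent pairs. Given this, $h$ descends to a self-homeomorphism $\bar h$ of $L\cong[0,1]$ with $\bar h(J)=J$ and $h[S]^\ast = \bar h(S^\ast)$, so $|S^\ast|$ is a homeomorphism invariant and no $h$ can send $D$ to $E$, contradicting CDH. The topological invariance of adjacency is delicate because general compact LOTS admit non-monotone self-homeomorphisms (the Cantor set being the paradigmatic example, exploiting its self-similarity); the argument must therefore leverage the absence of Cantor subsets. My plan would be to show that for $X$ in our setting the linear order is topologically determined up to reversal — so every self-homeomorphism is either order-preserving or order-reversing, and hence preserves adjacency — by ruling out the Cantor-style self-similarity required for a non-monotone homeomorphism to exist.
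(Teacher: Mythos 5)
Your preliminary reductions (eliminating $[0,1]$-components via Cantor's characterization, passing to a crowded, zero-dimensional compact LOTS) match the paper's, and the jump-collapsing construction of $q\colon X\to[0,1]$ is an interesting alternative to the paper's use of Ostaszewski's representation theorem. However, your proof has a genuine gap at exactly the point you flag as ``the main obstacle,'' and the claim you propose to prove there is in fact false.

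You want to show that every self-homeomorphism of $X$ preserves order-adjacency (or, more strongly, that every self-homeomorphism is monotone). Neither holds in general for crowded, zero-dimensional compact LOTS without Cantor subsets. Take $A$ to be the split interval (double arrow space), which is crowded, zero-dimensional, compact, and Cantor-set-free, and let $X = A_1 \oplus A_2$ be the ordered sum of two copies. There is a jump at $(\max A_1, \min A_2)$. The homeomorphism that swaps $A_1$ and $A_2$ order-isomorphically sends $\max A_1 \mapsto \max X$ and $\min A_2 \mapsto \min X$, a non-adjacent pair with the order even reversed; so adjacency is not a topological invariant and non-monotone homeomorphisms exist. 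Absence of Cantor subsets does not give the rigidity you need. Without this, $h$ does not descend to a map on the quotient $L$, the invariance of $|S^\ast|$ fails, and the comparison of $D^\ast$ and $E^\ast$ collapses. One might hope to repair this by invoking CDH itself to force monotonicity, but you would then need a separate (and nontrivial) argument for that, and it is not obvious one exists.

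The paper avoids this issue entirely: after the same reductions, it applies Ostaszewski's characterization to write $X$ as $([0,1]\times\{0\})\cup(Y\times\{1\})$ with the lexicographic order for some $Y$ dense in $(0,1)$, observes that if $Y$ lacks the Baire property one finds a Cantor set in $[0,1]\setminus Y$ and hence in $X$, and in the complementary case invokes the Baire-property arguments of Arhangel'ski\u\i--van Mill and the author's earlier work on the double arrow space to show $X$ is not CDH. That route never needs homeomorphisms to respect the order, which is precisely the false lemma your approach requires.
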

\begin{proof}
Assume that there exists a space $X$ with the characteristics in the statement of this proposition. First, it is not hard to prove that any \CDH{} space is a topological sum of homogeneous \CDH{} spaces, this can be easily done following the proof of \cite[Theorem, p. 20]{fitz-lauer}. By the well-known characterization of the reals as the only separable, connected, linearly ordered set without endpoints, every non-trivial connected component of $X$ is homeomorphic to $[0,1]$. So we may assume that $X$ is $0$-dimensional and moreover it does not have isolated points.

By a result of Ostaszewki's (\cite{arrows}), it is not hard to see that there exists a set $Y$ dense in $(0,1)$ such that $X$ is homeomorphic to the space $([0,1]\times\{0\})\cup(Y\times\{1\})$ with the topology given by the lexicographic order.

Now let us show that $Y$ has the Baire property. If not, there exists a family $\{C_n:n<\omega\}$ of closed and nowhere dense subsets of $[0,1]$ and an open set $U\subset[0,1]$ such that $Y\cap U\subset\bigcup\{C_n:n<\omega\}$. Then there exists a Cantor set $C\subset U\setminus Y$ and $C\times\{0\}\subset X$ is homeomorphic to the Cantor set.

Using that $Y$ has the Baire property, it is possible to use arguments similar to the ones in Section 3 of \cite{arh-vm-cdh-cardinality} (or Section 3 in \cite{hg-arrow}) that show that $X$ is not \CDH{}. We leave the details to the reader.
\end{proof}

Finally, the techniques we have only produce $0$-dimensional spaces so we ask the following.

\begin{ques}
Is there a connected example of a compact Hausdorff CDH space that contains no Cantor sets?
\end{ques}

\end{document}